
\newif\ifpictures
\picturestrue

\documentclass[12pt]{amsart}
\usepackage{amssymb}
\usepackage{amsmath}
\usepackage{amscd}
\usepackage[pdftex]{graphicx}
\usepackage[normalem]{ulem}
\usepackage{hyperref}
\usepackage{bbm} %

\headheight=8pt
\topmargin=30pt 
\textheight=611pt     \textwidth=456pt
\oddsidemargin=6pt   \evensidemargin=6pt

\frenchspacing

\numberwithin{equation}{section}
\newtheorem{thm}{Theorem}
\newtheorem{prop}[thm]{Proposition}
\newtheorem{lemma}[thm]{Lemma}
\newtheorem{cor}[thm]{Corollary}

\newtheorem{fact}[thm]{Fact}

\theoremstyle{definition}

\newtheorem{example}[thm]{Example}
\newtheorem{remark1}[thm]{Remark}
\newtheorem{openproblem1}[thm]{Open problem}
\newtheorem{definition}[thm]{Definition}

\numberwithin{thm}{section}

\newcounter{FNC}[page]
\def\newfootnote#1{{\addtocounter{FNC}{2}$^\fnsymbol{FNC}$%
     \let\thefootnote\relax\footnotetext{$^\fnsymbol{FNC}$#1}}}

\newcommand{\C}{\mathbb{C}}
\newcommand{\N}{\mathbb{N}}

\newcommand{\R}{\mathbb{R}}

\newcommand{\compl}{\mathsf{c}}

\DeclareMathOperator{\conv}{conv}
\DeclareMathOperator{\cone}{cone}

\DeclareMathOperator{\inter}{int}

\DeclareMathOperator{\diag}{diag}

\DeclareMathOperator{\tr}{tr}
\DeclareMathOperator{\cl}{cl}

\let\Im\relax
\DeclareMathOperator{\Im}{Im}

\newcommand{\sym}{\mathcal{S}}

\usepackage{xcolor}

\setlength{\parskip}{0cm plus2pt minus0pt}

\title[Conic stability of polynomials]{Conic stability of polynomials}
\author{Thorsten J\"orgens}
\author{Thorsten Theobald}

\address{Goethe-Universit\"at, FB 12 -- Institut f\"ur Mathematik,
Postfach 11 19 32, D--60054 Frankfurt am Main, Germany}

\email{\{joergens,theobald\}@math.uni-frankfurt.de}

\date{\today}

\begin{document}
\begin{abstract}
  We introduce and study the notion of conic stability of 
  multivariate complex polynomials in $\mathbb{C}[\mathbf{z}]$, which 
  naturally generalizes the stability of multivariate
  polynomials.
  In particular, we generalize Borcea's and Br\"and\'en's 
  multivariate version of the Hermite-Kakeya-Obreschkoff Theorem 
  to the conic stability and provide a characterization in terms 
  of a directional Wronskian.
  And we generalize a major criterion for stability of determinantal polynomials
  to stability with respect to the positive semidefinite cone.    
\end{abstract}

\maketitle

\section{Introduction}

Stable polynomials have a rich history (see, e.g., 
\cite{rahman-schmeisser-2002}) and attracted a lot of interest 
in recent years. Prominent
research directions include the generalization of classical results on
univariate stable polynomials to multivariate stable polynomials (see, e.g.,
\cite{borcea-braenden-2008, borcea-braenden-leeyang1,
  borcea-braenden-2010, kpv-2015, wagner-2011}) as well
as applications of stable polynomials to various areas of mathematics 
and theoretical
computer science, see
\cite{anari-gharan-2017, borcea-braenden-leeyang2,gurvits-2008,mss-interlacing1,
	mss-interlacing2,pemantle-2012,straszak-vishnoi-2017} and the references therein.
A polynomial $f = f(\mathbf{z}) = f(z_1, \ldots, z_n) \in \C[\mathbf{z}] = \C[z_1,\ldots,z_n]$ is called \emph{stable} if every root $\mathbf{z}$ satisfies $\Im(z_j) \leq 0$ for some $j$.
A stable polynomial $f$ with real coefficients is called \emph{real stable}.

In \cite{TTT}, the authors and de Wolff introduced a geometric approach to stability phenomena introducing the imaginary projection of a polynomial as the set
\begin{equation}
  \label{eq:im1}
	\mathcal{I}(f) \ = \ \{\Im(\mathbf{z})=(\Im(z_1),\ldots,\Im(z_n)):f(\mathbf{z})=0\} \, ,
\end{equation}
where $\Im(\cdot)$ denotes the imaginary part of a complex number.
Using this notion, stability of $f$ is equivalent to $\mathcal{I}(f)\cap(\R_{>0})^n=\emptyset$. 

This geometric view upon stability of polynomials naturally suggests to 
extend the results of the (usual) stability notion to more general real cones.
In this article, given a cone $K \subset \R^n$, 
a polynomial $f$ is called \emph{$K$-stable} if 
$\mathcal{I}(f)\cap \inter K=\emptyset$, where $\inter K$ denotes the interior of $K$. 
Note that $(\R_{\ge 0})^n$-stability coincides with the usual stability.
And note that setting $\Omega = \R^n + iK$, our $K$-stability
also falls into the more general
class of stability notions which forbid zeroes in an arbitrarily given
complex set $\Omega \subset \C^n$ -- however, as pointed out 
in \cite[p.~81]{wagner-2011}, little can be said on a class of that generality.
For polynomials with matrix variables, we consider the special case where 
$K = \sym_n^{+}$ is the cone of positive semidefinite 
matrices.

In the paper, we initiate to develop a theory of $K$-stability of multivariate polynomials.
To begin with, 
we extend the well-known characterization of stable polynomials in terms
of hyperbolic polynomials to the conic case, see 
Lemma~\ref{lemma:K-stability-univariate}.
Our main contribution is the generalization of
three core results on multivariate stable polynomials
to the conic stability. Firstly, we show that the classical Theorem of Hermite-Kakeya-Obreschkoff,
which has been generalized from the univariate to the multivariate case by 
Borcea and Br\"and\'{e}n \cite[Theorem~6.3.8]{borcea-braenden-2010}, 
can be further generalized to $K$-stability for multivariate polynomials;
see Theorem~\ref{thm:HKO-for-K-stability}.
Secondly, we characterize conic stability with respect to polyhedral
cones and non-polyhedral cones in terms of a directional Wronskian;
see Theorem~\ref{th:polynonpoly}.
Thirdly, we show that Borcea's and Br\"and\'{e}n's 
prominent criterion for stability of 
determinantal polynomials in \cite[Theorem 2.4]{borcea-braenden-2008} can be generalized
to stability with respect to the positive semidefinite cone $\sym_n^+$; see 
Theorem~\ref{thm:psd-stability-determinantal-polynomial}.

Our statements and their proofs apply conic duality, and the generalization of the
stability criterion for determinantal polynomials is given in terms of the 
Khatri-Rao product of matrices.

While our work was mainly motivated by the intrinsic relevance and the structure
of stable polynomials, we note that the case $K = \mathcal{S}_n^{+}$ is 
naturally related
to the Siegel upper half-spaces in the theory of modular forms, see 
Section~\ref{se:siegel}.

Beside the actual statements themselves, we think that these extensions pinpoint
that the conic stability offers a very natural generalized framework for studying
stability issues of multivariate polynomials.

The paper is structured as follows. In Section~\ref{se:prelim}, we collect some known
statements and the connection to Siegel upper half-spaces in the theory
of modular forms. In Section~\ref{se:kstab}, we provide some basic results on
$K$-stable polynomials. Then, in Section~\ref{se:hb-and-hko}, we generalize
the multivariate Hermite-Kakeya-Obreschkoff Theorem to the conic 
setting and study conic stability by means of the directional Wronskian.
 Section~\ref{se:psdstab} contains the generalization of the 
characterization of stability for determinantal polynomials.

\section{Preliminaries\label{se:prelim}}

Throughout the text, bold letters denote $n$-dimensional vectors 
unless noted otherwise.

\subsection{Stability theorems}
As general references on stable polynomials, we refer to 
\cite{borcea-braenden-2010,rahman-schmeisser-2002, wagner-2011}. Note that in
our definition of stability, the zero polynomial is not stable,
in consistency with the convention in~\cite{borcea-braenden-2010}.

For univariate, real stable polynomials $f,g\in\R[z]$, let 
$W(f,g) = f'g-g'f$ denote the \emph{Wronskian} of $f$ and $g$
and write $f\ll g$ if $W(f,g) \leq0$ on $\R$. 
Note that univariate, real stable polynomials are real-rooted. 
In the context of univariate stable polynomials, the following concept of 
interlacing roots naturally appears.
\begin{definition}
	Let $f,g\in\R[z]$ be two univariate, real-rooted polynomials with roots
	$\alpha_1 \leq \alpha_2 \leq \cdots \leq \alpha_{\deg f}$ and $\beta_1\leq \beta_2\leq \cdots\leq \beta_{\deg g}$. We say that
	 $f$ and $g$ \emph{interlace} if their roots alternate, i.e., 
	 $\alpha_1 \leq \beta_1 \leq \alpha_2 \leq \beta_2 \leq \cdots$ 
	 or $\beta_1 \leq \alpha_1 \leq \beta_2 \leq \alpha_2 \leq \cdots$. 
	 If all inequalities are strict, $f$ and $g$ \emph{interlace strictly}.
	
	We say that $f$ \emph{interlaces $g$ properly} 
	(or: $f$ is a \emph{proper interlacing} of $g$), if
	\begin{itemize}
	\item $\beta_{\deg g}\geq \alpha_{\deg f} \geq \beta_{\deg g-1}\geq \alpha_{\deg f-1}\geq\cdots$, when the leading coefficients of $f$ and $g$ have the same sign,
	\item $\alpha_{\deg f}\geq \beta_{\deg g}\geq \alpha_{\deg f-1}\geq\beta_{\deg g-1}\geq\cdots$, when the leading coefficients of $f$ and $g$ have different signs.
	\end{itemize}
\end{definition}
For interlacing polynomials $f$ and $g$, the degrees of $f$ and $g$ can only 
differ by at most~$1$. 
We collect two
classical theorems on univariate stable polynomials 
(see \cite{rahman-schmeisser-2002,wagner-2011}).

\begin{prop}[Hermite-Biehler]\label{thm:HB}
	For $f,g\in\R[z] \setminus \{0\}$, the following are equivalent:
	\begin{enumerate}
	\item $g+if$ is stable.
	\item $f,g$ are real stable and $f\ll g$.
	\item $f,g$ are real stable and $f$ interlaces $g$ properly.
	\end{enumerate}
\end{prop}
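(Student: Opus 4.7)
My plan is to prove the equivalences through the cycle $(1) \Rightarrow (2) \Rightarrow (3) \Rightarrow (1)$, with $h := g + if$. The backbone will be the identity $h(z)\,\overline{h(\bar z)} = g(z)^2 + f(z)^2$ (valid because $\overline{h(\bar z)} = g(z) - if(z)$ when $f, g \in \R[z]$), the elementary geometric relation $|z - \alpha|^2 - |z - \bar\alpha|^2 = -4\,\Im(z)\,\Im(\alpha)$, and the logarithmic derivative formula $\Im\bigl(h'(x)/h(x)\bigr) = W(f,g)(x)/\bigl(g(x)^2 + f(x)^2\bigr)$ on $\R$ wherever $h(x) \ne 0$.

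For $(1) \Rightarrow (2)$, I would factor $h = c\prod_k (z - \zeta_k)$ with $\Im \zeta_k \le 0$. The geometric relation yields $|h(z)| \ge |h(\bar z)|$ for $\Im z > 0$, strictly unless all $\zeta_k$ are real. If $f$ had a non-real root $\alpha$ with $\Im\alpha > 0$, then $h(\alpha) = g(\alpha)$ and evaluating the algebraic identity at $\alpha$ gives $|h(\bar\alpha)| = |g(\alpha)| = |h(\alpha)|$; equality forces all $\zeta_k$ to be real, so $h = (c_1 + ic_2)\prod_k(z - \zeta_k)$ and hence $f = c_2\prod_k(z - \zeta_k)$ is real-rooted, contradicting the existence of $\alpha$. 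Therefore $f$ (and, by symmetry, $g$) is real stable. To derive $f \ll g$, I would use the partial fraction expansion $h'(x)/h(x) = \sum_k 1/(x - \zeta_k)$ on $\R$; each summand has imaginary part $\Im\zeta_k/|x-\zeta_k|^2 \le 0$, so $\Im(h'/h) \le 0$ and the logarithmic derivative formula delivers $W(f,g) \le 0$.

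For $(2) \Rightarrow (3)$, the identity $W(f,g)/g^2 = (f/g)'$ converts $f \ll g$ into the monotonicity of $f/g$ on each interval where $g \ne 0$; between consecutive real roots of $g$, $f/g$ must descend from $\pm\infty$ to $\mp\infty$ and so vanishes once in each gap, yielding the interlacing of roots (the two leading-coefficient cases in ``proper'' are dictated by the asymptotic sign of $f/g$ as $x \to \pm\infty$). For $(3) \Rightarrow (1)$, I would first perturb $f$ and $g$ slightly to reduce to strict interlacing with simple roots; then $f$ and $g$ share no real zero, so $h$ is nowhere zero on $\R$, and the argument principle on a large semicircular contour in the closed upper half plane gives zero net winding of $h$ around $0$: the arc contributes $+(\deg h)\pi$ from the leading monomial, while along the real axis $\arg(g + if)$ decreases by exactly $(\deg h)\pi$ because the monotonicity of $f/g$ between its poles and zeros (guaranteed by strict proper interlacing) makes the curve $x \mapsto g(x) + if(x)$ cross the real and imaginary axes in the expected alternating pattern. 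Hence $h$ has no zero with $\Im > 0$, and Hurwitz's theorem carries the conclusion back to the unperturbed $h$.

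The main obstacle will be the bookkeeping in $(3) \Rightarrow (1)$: tracking the net change of $\arg h$ along the real axis across all the degree and leading-coefficient configurations covered by ``proper interlacing'', and choosing the perturbation so that strict proper interlacing is preserved in the limit. A secondary technical issue in $(1) \Rightarrow (2)$ is the careful use of the sharpness of $|h(z)| \ge |h(\bar z)|$ to separate the all-real-roots case from the generic one, which is where the clean contradiction comes from.
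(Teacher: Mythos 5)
The paper does not prove this proposition: it is quoted as a classical result, with the proof delegated to the references \cite{rahman-schmeisser-2002,wagner-2011}. So there is no in-paper argument to compare against; judged on its own, your proof is correct in outline and is essentially the standard one. The chain $(1)\Rightarrow(2)$ via the identity $h(z)\overline{h(\bar z)}=f(z)^2+g(z)^2$, the inequality $|h(z)|\ge|h(\bar z)|$ for $\Im z>0$ with its equality analysis, and the formula $\Im(h'/h)=W(f,g)/(f^2+g^2)$ is exactly the textbook route (note only that you need $h(\alpha)\neq 0$, which stability supplies, for the equality case to force all $\zeta_k$ real, and that $W(f,g)\le 0$ extends to common real zeros of $f,g$ by continuity of the polynomial $W(f,g)$). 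The step $(2)\Rightarrow(3)$ via monotonicity of $f/g$ is standard. The only place where real work remains is $(3)\Rightarrow(1)$: the argument-principle/Cauchy-index bookkeeping you describe is a legitimate classical proof, but the perturbation to strict interlacing with simple roots and the case analysis over degrees and leading-coefficient signs must actually be carried out, and it is precisely the ``proper'' hypothesis that fixes the sign of the real-axis contribution (mere interlacing would allow $g+if$ to be anti-stable). A cleaner way to close the cycle, common in the literature, is $(3)\Rightarrow(2)\Rightarrow(1)$: proper interlacing makes $-f/g$ a Herglotz--Pick function mapping the open upper half-plane into the closed lower half-plane, so $f/g$ never takes the value $i$ there and $h=g+if$ cannot vanish; this avoids the winding-number computation entirely.
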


Extending the definition of $\ll$ and of interlacing to arbitrary 
$f,g \in \R[x]$ 
by requiring real stability of $f$ and $g$, then condition (2) can be written shortly as
$f \ll g$
and (3) can be written shortly as: $f$ interlaces $g$ properly.

\begin{prop}[Hermite-Kakeya-Obreschkoff (HKO, for short)]\label{thm:HKO-univariate}
	Let $f,g\in\R[z]$. Then $\lambda f+\mu g$ is stable or the zero polynomial for 
    all $\lambda,\mu\in \R$ if and only if $f$ and $g$ interlace or $f \equiv g \equiv 0$.
\end{prop}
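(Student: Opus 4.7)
The plan is to reduce both directions of the equivalence to the Hermite-Biehler theorem (Proposition~\ref{thm:HB}), combined with a preservation-of-interlacing lemma for real linear perturbations.

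For the $(\Leftarrow)$ direction, I would first factor out $\gcd(f,g)$, which is real-rooted and divides every $\lambda f + \mu g$, so that I may assume $f$ and $g$ are coprime; the case where one is identically zero is trivial. Since interlacing is preserved under $f \mapsto -f$ and this substitution merely relabels the real parameters in the conclusion, I may further assume $f$ interlaces $g$ properly. The key step is then the following lemma: if $f$ interlaces $g$ properly, then for every $a \in \R$ the polynomial $g + af$ is real-rooted and is still properly interlaced by $f$. This lemma follows from an intermediate-value argument—at each root of $f$ the value of $g + af$ equals that of $g$, and proper interlacing forces $g$ to change sign between consecutive roots of $f$, so $g + af$ acquires a real root in each such gap; the outermost intervals are handled by the leading-coefficient case distinction built into the definition of proper interlacing, after comparing $\deg f$ and $\deg g$. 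Applying the lemma with $a = \lambda/\mu$ (for $\mu \ne 0$) yields real-rootedness of $\lambda f + \mu g$, i.e., stability in the univariate real sense; the case $\mu = 0$ is immediate.

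For the $(\Rightarrow)$ direction, specializing $(\lambda,\mu) = (1,0)$ and $(0,1)$ shows that each of $f$ and $g$ is real-rooted or zero; the doubly-zero case is the stated alternative, and the case of exactly one being zero is covered by the convention that the zero polynomial interlaces every real-rooted polynomial. Otherwise both are nonzero real-rooted polynomials, and I argue by contradiction. If they do not interlace, then after cancelling common factors there exist consecutive roots $\alpha_i < \alpha_{i+1}$ of $f$ such that $g(\alpha_i)$ and $g(\alpha_{i+1})$ are nonzero of the same sign (since the number of roots of $g$ between them is $0$ or at least $2$). On $(\alpha_i, \alpha_{i+1})$ the continuous function $a(z) := -g(z)/f(z)$ diverges to $\pm\infty$ with the same sign at both endpoints (because $f \to 0$ keeping its sign and $g$ keeps its sign), so it attains an interior extremum $a^*$ at some $z^* \in (\alpha_i, \alpha_{i+1})$. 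Then $h_{a^*}(z) := g(z) + a^* f(z)$ has a double real root at $z^*$, and perturbing $a$ slightly past $a^*$ splits this double root into a complex-conjugate pair of roots, contradicting the hypothesis that $h_a$ is real-rooted for every real $a$.

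The main technical obstacle is the preservation-of-interlacing lemma in the $(\Leftarrow)$ direction, specifically the bookkeeping of degrees and of the outermost intervals: proper interlacing depends on the relationship between $\deg f$ and $\deg g$ and on the signs of the leading coefficients, and one has to verify all cases to conclude that $f$ still interlaces $g + af$ \emph{properly} (not merely interlaces it). Once this lemma is in place, everything else reduces to elementary sign analysis and continuity.
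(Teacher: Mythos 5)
The paper itself does not prove this proposition: it is quoted as a classical result (with references to Rahman--Schmeisser and Wagner), so there is no in-paper argument to compare against. Your proposal is the standard textbook proof --- gcd reduction, sign alternation of $g$ at the roots of $f$ for sufficiency, and the ``interior extremum of $-g/f$ yields a double root that splits into a conjugate pair'' argument for necessity --- and its overall architecture is sound. The $(\Leftarrow)$ direction is fine in outline; note that after removing the gcd, interlacing coprime polynomials automatically have simple, strictly alternating roots (a repeated root of $f$ would force a coinciding root of $g$), which is exactly what makes the intermediate-value count work, and the deferred degree/endpoint bookkeeping is indeed where the remaining effort sits.

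There is, however, a concrete step in the $(\Rightarrow)$ direction that fails as written. The claim that coprime, real-rooted, non-interlacing $f,g$ admit consecutive roots $\alpha_i<\alpha_{i+1}$ of $f$ with $g$ nonzero of the same sign at both is false in general: take $f=z^2$, $g=z-1$. These are coprime and do not interlace (and $-f+g$ has non-real roots, so the theorem is not threatened), but $f$ has no two \emph{distinct} consecutive roots and $g$ has only one root, so the configuration you propose to exploit does not exist. Two repairs are needed. First, the search must be symmetrized over consecutive roots of $f$ \emph{and} of $g$; your own parenthetical already forces this, since an odd number $\ge 3$ of roots of $g$ in the gap gives \emph{opposite} signs of $g$ at the endpoints (so ``$0$ or at least $2$'' does not imply ``same sign''), and that case must be handled by swapping the roles of $f$ and $g$. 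Second, a root of $f$ of multiplicity $r\ge 2$ at a point where $g\neq 0$ needs its own local version of your perturbation argument: there $-f/g$ has a zero of order $r$, and for a suitable small $\mu\neq 0$ the equation $-f/g=\mu$ has fewer than $r$ real solutions near that point while $f+\mu g$ has $r$ roots clustered there, again producing non-real roots. Finally, the case where exactly one of $f,g$ vanishes identically is not actually covered by the paper's definition of interlacing (which requires both polynomials to be real-rooted); that is a defect of the classical statement as transcribed rather than of your argument, but you should flag it as such rather than appeal to a convention the paper does not adopt.
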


Moreover, the following theorem will be useful.

\begin{prop}[Hurwitz, see Theorem 1.3.8 in \cite{rahman-schmeisser-2002}]\label{thm:Hurwitz}
	Let $\{f_j\}_{j\in\N}\subset\C[\mathbf{z}]$ be a sequence of polynomials non-vanishing in a
	connected 
	open set $U\subset\C^n$, and assume it converges to a function $f$ uniformly on compact subsets of $U$. 
	Then $f$ is either non-vanishing on $U$ or it is identically $0$.
\end{prop}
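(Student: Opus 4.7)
The plan is to reduce the multivariate statement to the univariate Hurwitz theorem (Theorem~1.3.8 in \cite{rahman-schmeisser-2002}) by slicing along a complex line through a suspected zero of $f$. First I would note that since each $f_j$ is holomorphic on $U$ and the convergence is uniform on compact subsets, the Weierstrass convergence theorem ensures that the limit $f$ is holomorphic on $U$. Assume for contradiction that $f$ is neither identically zero on $U$ nor nowhere-vanishing on $U$; then there exists $\mathbf{z}_0 \in U$ with $f(\mathbf{z}_0) = 0$ while $f \not\equiv 0$ on $U$.

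Next I would select a suitable complex line through $\mathbf{z}_0$. Since $U$ is open, it contains a ball $B(\mathbf{z}_0, \rho) \subset U$ for some $\rho > 0$, and by the connectedness of $U$ together with the identity theorem, $f$ does not vanish identically on $B(\mathbf{z}_0, \rho)$. Expanding $f$ in its Taylor series $f(\mathbf{z}_0 + \mathbf{w}) = \sum_\alpha c_\alpha \mathbf{w}^\alpha$ and restricting to a line $\mathbf{w} = w \mathbf{v}$ gives $g_{\mathbf{v}}(w) = \sum_d \bigl(\sum_{|\alpha| = d} c_\alpha \mathbf{v}^\alpha\bigr) w^d$. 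If $g_{\mathbf{v}} \equiv 0$ for every $\mathbf{v}$, then all $c_\alpha$ would vanish, contradicting $f \not\equiv 0$ near $\mathbf{z}_0$. Hence there exists $\mathbf{v} \in \C^n$ with $0 < |\mathbf{v}| < \rho$ such that $g(w) := f(\mathbf{z}_0 + w\mathbf{v})$ is holomorphic and not identically zero on the closed disk $\{|w| \leq 1\}$, and $\mathbf{z}_0 + w \mathbf{v} \in B(\mathbf{z}_0, \rho) \subset U$ for $|w| \leq 1$, with $g(0) = 0$.

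Finally I would apply the univariate Hurwitz theorem to $g_j(w) := f_j(\mathbf{z}_0 + w\mathbf{v})$. Each $g_j$ is a polynomial in $w$, and any zero $w_0$ with $|w_0| \leq 1$ would produce a zero $\mathbf{z}_0 + w_0 \mathbf{v} \in U$ of $f_j$, contradicting the hypothesis; hence each $g_j$ is non-vanishing on $\{|w| \leq 1\}$. Since the map $w \mapsto \mathbf{z}_0 + w\mathbf{v}$ sends the closed unit disk to a compact subset of $U$, the hypothesis of uniform convergence transfers to $g_j \to g$ uniformly on $\{|w| \leq 1\}$, so the univariate Hurwitz theorem forces either $g \equiv 0$ or $g$ nowhere-vanishing on that disk. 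Both alternatives contradict $g(0) = 0$ together with $g \not\equiv 0$. The main obstacle is the slicing step, in which one must simultaneously keep the complex line inside $U$ (handled by shrinking $|\mathbf{v}|$) and guarantee that the one-variable restriction is non-trivial (handled by the power-series argument above).
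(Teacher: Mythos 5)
The paper does not prove this proposition at all: it is imported verbatim as a known result, with a citation to Theorem~1.3.8 of Rahman--Schmeisser, so there is no in-paper argument to compare yours against. Your proof is correct and is essentially the standard way the multivariate statement is deduced from the one-variable Hurwitz theorem: Weierstrass gives holomorphy of the limit, the identity theorem on the connected set $U$ rules out $f$ vanishing on a whole ball unless $f\equiv 0$, the homogeneous-parts argument produces a direction $\mathbf{v}$ along which the restriction $g(w)=f(\mathbf{z}_0+w\mathbf{v})$ is not identically zero, and the closed disk maps to a compact subset of $U$ so that uniform convergence and non-vanishing of the $g_j$ transfer to the slice; the univariate theorem then yields the contradiction with $g(0)=0$, $g\not\equiv 0$. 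All the points where such slicing arguments typically go wrong --- keeping the line inside $U$, ensuring the restriction is non-trivial, and applying the one-variable result on an \emph{open} disk containing $w=0$ (available here since $|\mathbf{v}|<\rho$ strictly) --- are handled. The only remark worth making is that your argument never uses that the $f_j$ are polynomials rather than arbitrary holomorphic functions on $U$, so you have in fact proved the more general holomorphic version, which is fine since the paper only needs the polynomial case.
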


For multivariate polynomials $f,g \in \R[\mathbf{z}]$, one writes $f \ll g$ if $g+if$ is stable
(see, e.g., \cite{borcea-braenden-2010, wagner-2011}; and
note that this makes the multivariate Hermite-Biehler statement a definition rather than
a theorem).
The multivariate version of the HKO Theorem
then has the same format as the univariate version. 
The multivariate theorem was shown in \cite[Theorem~1.6]{borcea-braenden-2010}, see also
\cite[Theorem~2.9]{borcea-braenden-leeyang1}, \cite[Theorem 2.9]{wagner-2011}.

\begin{prop}[Multivariate HKO of Borcea and Br\"and\'{e}n]\label{thm:HKO}
	Let $f,g\in\R[\mathbf{z}]$. Then $\lambda f+\mu g$ is stable or the zero polynomial for 
  all $\lambda,\mu\in \R$ if and only if
  $f\ll g$ or $g\ll f$ or $f \equiv g \equiv 0$.
\end{prop}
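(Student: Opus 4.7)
My plan is to prove both directions using two main tools: restriction to real affine lines with componentwise positive direction (the standard criterion for multivariate real stability), and Hurwitz's theorem (Proposition~\ref{thm:Hurwitz}).

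$(\Leftarrow)$: WLOG assume $f \ll g$ (the case $g \ll f$ is symmetric, and $f \equiv g \equiv 0$ is trivial). The core claim is that the auxiliary polynomial $F(\mathbf{z}, w) := g(\mathbf{z}) + w\,f(\mathbf{z}) \in \R[\mathbf{z},w]$ is real stable as a polynomial in $n+1$ variables. To verify this, restrict $F$ to an arbitrary real affine line $(\mathbf{a},b) + t(\mathbf{v},d)$ with $\mathbf{a} \in \R^n$, $b \in \R$, $\mathbf{v} \in \R^n_{>0}$, and $d > 0$; writing $g_L(t) = g(\mathbf{a}+t\mathbf{v})$ and $f_L(t) = f(\mathbf{a}+t\mathbf{v})$, the restriction becomes $F_L(t) = g_L(t) + (b + td)\,f_L(t)$. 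Since $f \ll g$, the restriction $g_L + i f_L$ of $g+if$ to the $\mathbf{z}$-line is univariate stable, so by Proposition~\ref{thm:HB} one has $f_L \ll g_L$ univariately, and the classical Herglotz property of univariate interlacing pairs then gives $\Im(g_L(t_0)/f_L(t_0)) \geq 0$ whenever $\Im(t_0) > 0$ and $f_L(t_0) \neq 0$. Suppose $F_L(t_0) = 0$ for some $t_0$ with $\Im(t_0) > 0$. If $f_L(t_0) = 0$ then $g_L(t_0) = 0$ too, contradicting univariate stability of $g_L$. Otherwise $b + t_0 d = -g_L(t_0)/f_L(t_0)$, whose imaginary part equals $d\,\Im(t_0) > 0$ on one side and is at most $0$ on the other, a contradiction. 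So every $F_L$ is univariate stable, hence $F$ is real stable in $n+1$ variables. For $\lambda, \mu \in \R$ with $\mu \neq 0$, $\lambda f + \mu g = \mu\,F(\mathbf{z}, \lambda/\mu)$ is the uniform-on-compacta limit as $\epsilon \to 0^+$ of the polynomials $\mu\,F(\mathbf{z}, \lambda/\mu + i\epsilon)$, each of which is stable because $\lambda/\mu + i\epsilon$ has positive imaginary part and stability of $F$ in $n+1$ variables yields stability of its specialization to any $w_0$ with $\Im(w_0) > 0$. Proposition~\ref{thm:Hurwitz} (Hurwitz) then gives that $\lambda f + \mu g$ is stable or the zero polynomial. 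The case $\mu = 0$ reduces to stability of $\lambda f$, which follows from stability of $f$ (itself a consequence of $f \ll g$).

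$(\Rightarrow)$: Assume $\lambda f + \mu g$ is stable or zero for all $\lambda, \mu \in \R$. I may assume $f \not\equiv 0$ and $g \not\equiv 0$, and that $f$ and $g$ are not real scalar multiples of each other (the proportional case reduces directly to $f \ll g$). Let $U := \{\mathbf{z} \in \C^n : \Im(z_j) > 0 \text{ for all } j\}$ and $V := U \cap \{\mathbf{z} : f(\mathbf{z}) \neq 0\}$. The set $V$ is open and connected: $U$ is connected, and $\{f = 0\} \cap U$ is a complex analytic subvariety of real codimension at least $2$, whose removal from a connected open subset of $\C^n$ preserves connectedness. For each $\mathbf{z}_0 \in V$ and each $t \in \R$, the (nonzero) polynomial $g + tf$ is stable by hypothesis, so $(g + tf)(\mathbf{z}_0) \neq 0$, which gives $g(\mathbf{z}_0)/f(\mathbf{z}_0) \neq -t$. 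Hence the holomorphic function $\phi := g/f$ satisfies $\Im \phi \neq 0$ everywhere on $V$. By continuity on the connected set $V$, $\Im \phi$ has constant sign. If $\Im \phi > 0$ on $V$: $\phi \neq -i$ on $V$ (since $\Im(-i) = -1 < 0$), so $g + if = f(\phi + i)$ is nonzero on $V$; and on $U \setminus V$ (where $f = 0$), $g + if = g$, which is nonzero because $g$ is stable (take $\lambda = 0, \mu = 1$ in the hypothesis). Thus $g + if$ is nonvanishing on $U$, i.e., $f \ll g$. If $\Im \phi < 0$ on $V$: analogously $g - if$ is nonzero on $V$ (as $\phi \neq i$) and on $U \setminus V$, so $g - if$ is stable; since $f + ig = i(g - if)$ has the same zero set, $f + ig$ is stable as well, giving $g \ll f$.

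The main obstacle is the classical Herglotz property of univariate interlacing polynomial pairs used in the forward direction (a standard fact from the univariate theory, cf.\ Rahman-Schmeisser); apart from this, the argument amounts to careful applications of Hurwitz and the connectedness of $V$.
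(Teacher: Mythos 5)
The paper does not actually prove this proposition --- it is quoted from Borcea and Br\"and\'en (with pointers to \cite{borcea-braenden-2010}, \cite{borcea-braenden-leeyang1} and Wagner's survey) --- so there is no in-paper proof to compare against line by line; the natural internal analogue is the paper's proof of the conic generalization, Theorem~\ref{thm:HKO-for-K-stability}. Your argument is correct. In the forward direction you in effect prove the multivariate Hermite--Biehler equivalence ($g+if$ stable iff $g+wf$ is stable in $n+1$ variables) by line restrictions and the Herglotz/Nevanlinna property of $g_L/f_L$, then specialize $w=\lambda/\mu+i\epsilon$ and apply Hurwitz; this exactly parallels the paper's route via Theorem~\ref{thm:HB-for-K-stability} and Fact~\ref{fact:specialization}, except that the paper gets stability of $\tilde g+(\alpha+i\beta)\tilde f$ from the univariate HKO plus a Wronskian computation, whereas you use the mapping property of $g_L/f_L$ on the upper half-plane --- equivalent facts via Proposition~\ref{thm:HB}. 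In the converse direction you genuinely diverge from the paper's conic proof: instead of restricting to lines, extracting interlacing from the univariate HKO, and running a homotopy argument to exclude the interlacing direction switching between lines, you argue globally that $\phi=g/f$ omits $\R$ on the connected set $V=U\cap\{f\neq 0\}$, so $\Im\phi$ has constant sign, which immediately identifies which of $g+if$, $f+ig$ is stable. This is cleaner and sidesteps continuity-of-roots, at the price of invoking that removing an analytic hypersurface from a domain in $\C^n$ preserves connectedness (standard, and you flag it).

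Two small loose ends, both easily closed: in the forward direction you should observe that a line restriction with $f_L\equiv 0$ or $g_L\equiv 0$ causes no harm (if both vanish identically then $g+if$ would vanish at $\mathbf{a}+i\mathbf{v}$, contradicting $f\ll g$; if only one does, $F_L$ is visibly stable), so Proposition~\ref{thm:HB} is applicable where you use it; and the specialization $F(\cdot,w_0)$ with $\Im w_0>0$ cannot be identically zero, since $g\equiv -w_0 f$ with $f,g$ real and $w_0\notin\R$ would force $f\equiv g\equiv 0$.
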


An important class of stable polynomials comes from determinantal representations
(\cite[Theorem~2.4]{borcea-braenden-2008}, see also \cite{braenden-2011,HV2007,LPR2005}).

\begin{prop}[Borcea, Br\"and\'{e}n]\label{pr:detcrit}
Let $A_1, \ldots, A_n$ be positive semidefinite $d \times d$-matrices
and $B$ be a Hermitian $d \times d$-matrix, then
\[
  f(\mathbf{x}) \ = \ \det \big( \sum_{j=1}^n x_j A_j + B \big)
\]
is real stable or the zero polynomial.
\end{prop}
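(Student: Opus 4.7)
The plan is to show that if $f$ does not vanish identically, then no point $\mathbf{z} = \mathbf{x} + i\mathbf{y}$ with every $y_j > 0$ is a root. First I would verify that $f$ has real coefficients: for $\mathbf{x} \in \R^n$ the matrix $M(\mathbf{x}) = \sum_j x_j A_j + B$ is Hermitian, so $\det M(\mathbf{x}) \in \R$, forcing all coefficients of $f$ to be real.

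The main step is the case where the PSD matrix $K := \sum_j y_j A_j$ is actually positive definite. Writing
\[
  M(\mathbf{z}) \ = \ H + iK, \qquad H \ := \ \sum_j x_j A_j + B,
\]
where $H$ is Hermitian, I would factor
\[
  M(\mathbf{z}) \ = \ K^{1/2}\bigl( K^{-1/2} H K^{-1/2} + i I \bigr) K^{1/2}.
\]
The inner matrix $K^{-1/2} H K^{-1/2}$ is Hermitian with real spectrum, so its eigenvalues shifted by $i$ all have imaginary part $1$, hence are nonzero. Therefore $\det M(\mathbf{z}) \neq 0$.

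To remove the positive definiteness assumption on $K$, I would perturb: set
\[
  f_{\varepsilon}(\mathbf{z}) \ := \ \det\Bigl( \sum_j z_j (A_j + \varepsilon I) + B \Bigr), \qquad \varepsilon > 0.
\]
Each $A_j + \varepsilon I$ is PSD (indeed PD), and now for any $\mathbf{y}$ with all $y_j > 0$, the matrix $\sum_j y_j(A_j + \varepsilon I) \succeq \varepsilon (\sum_j y_j) I$ is positive definite. By the preceding paragraph, $f_{\varepsilon}$ has no zero in the open upper-half polydomain $\HH^n := \{\mathbf{z} \in \C^n : \Im z_j > 0 \text{ for all } j\}$. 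As $\varepsilon \to 0^+$, $f_{\varepsilon} \to f$ coefficientwise, hence uniformly on compact subsets of $\C^n$. By Hurwitz's theorem (Proposition~\ref{thm:Hurwitz}), $f$ restricted to the connected open set $\HH^n$ is either nowhere zero or identically zero; in the latter case, since $\HH^n$ has nonempty interior in $\C^n$, $f$ must vanish as a polynomial.

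The main obstacle I anticipate is the perturbation/Hurwitz step: one must make sure that the limit argument cannot produce a nonzero polynomial that nevertheless vanishes somewhere on $\HH^n$, and for this the connectedness of $\HH^n$ together with the analytic continuation principle (a polynomial vanishing on a nonempty open subset of $\C^n$ is the zero polynomial) is precisely what is needed. The algebraic heart of the argument, namely the factorization through $K^{1/2}$, is a clean consequence of the PSD structure of the $A_j$ and the Hermitian structure of $H$, and does not pose a real difficulty once one allows the perturbation.
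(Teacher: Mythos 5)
Your proof is correct, and it is essentially the same argument the paper uses for its psd-generalization (Theorem~\ref{thm:psd-stability-determinantal-polynomial}): reduce to the case where the imaginary part $\sum_j y_j A_j$ is positive definite, conjugate by its square root to reduce to a Hermitian matrix plus $iI$ (equivalently, real-rootedness of the univariate restriction), and then handle general positive semidefinite $A_j$ by an $\varepsilon$-perturbation combined with Hurwitz's theorem. The only cosmetic difference is that you argue non-vanishing at the complex point directly rather than via real-rootedness of $t \mapsto f(\mathbf{x}+t\mathbf{y})$; these are interchangeable.
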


Determinantal representations of this kind are relevant in the
context of the Lax conjecture (proven by 
Lewis, Parrilo, Ramana \cite{LPR2005}, see  
\cite[Corollary 6.7]{borcea-braenden-2010} for a formulation on 
stable polynomials) as well as its variations and 
generalizations (see, for example, \cite[Section~5]{borcea-braenden-2008}).

\subsection{Imaginary projections and hyperbolicity cones\label{se:imag-proj}}

Set $A^{\compl} = \R^n \setminus A$ for the complement of
a set $A \subseteq \R^n$, and write $\cl A$ for its closure.
For a polynomial $f \in \C[\mathbf{z}]$, the complement $(\cl \mathcal{I}(f))^{\compl}$
of the imaginary projection~\eqref{eq:im1} consists of finitely many convex components
\cite{TTT}.
In the special case of a non-constant polynomial
$f = \sum_{i=1}^n a_i z_i + a_0$ with real coefficients 
$a_1, \ldots, a_n \in \R$ and $a_0 \in \C$, we
have $\mathcal{I}(f) = \{\mathbf{y} \in \R^n \, : \, \sum_{i=1}^n a_i y_i + \Im(a_0) = 0\}$. 

A homogeneous polynomial $f\in\C[\mathbf{z}]$ is called \emph{hyperbolic} in
a real direction $\mathbf{e}\in\R^n$ if $f(\mathbf{e})\neq0$ and for every $\mathbf{x} \in \R^n$ the function $t\mapsto f(\mathbf{x}+t\mathbf{e})$ has only real roots.
Denote by $C(\mathbf{e})=\{\mathbf{x}\in\R^n:f(\mathbf{x}+t\mathbf{e})=0 \Rightarrow t<0\}$ the \emph{hyperbolicity cone} of $f$ with respect to $\mathbf{e}$.
Then $C(\mathbf{e})$ is convex, $f$ is hyperbolic with respect
to every point $\mathbf{e}'$ in its hyperbolicity cone and $C(\mathbf{e})=C(\mathbf{e}')$ (see \cite{garding-59}). 

For a homogeneous polynomial $f\in\C[\mathbf{z}]$,
the hyperbolicity cones of $f$
coincide with the components of $\mathcal{I}(f)^{\compl}$ \cite{joergens-theobald-2017}.

\subsection{The Siegel upper half-space\label{se:siegel}}

The \emph{Siegel upper half-space} (or \emph{Siegel upper half-plane}) $\mathcal{H}_g$
of  degree $g$ (or genus $g$) is defined as
\[
  \mathcal{H}_g \ =  \{ A \in \C^{g \times g} \text{ symmetric } \, : \, \Im(A) \text{ is positive definite} 
   \} \, ,
\]
where $\Im(A) = (\Im(a_{ij}))_{g \times g}$
(\cite{siegel-1939}, see also, e.g., \cite[\S{2}]{van-der-geer-2008}). 
The Siegel upper half-space constitutes the domain
on which the Siegel theta functions are defined. 
It can be used to parameterize polarized
varieties (see also, for example, \cite[Vol.~1, \S{3.I}]{ito-encyclopedic-1987}
and for the use in elliptic curve cryptography \cite[\S{5.1}]{frey-lange-2006}).

\section{\texorpdfstring{$K$}{K}-stability and psd-stability\label{se:kstab}}

Let $K$ be a \emph{proper} cone in $\R^n$, that is, a full-dimensional, closed and pointed convex cone in $\R^n$.
We consider the following generalization of stability.
Let $\sym_{n}$ be the set of real symmetric $n\times n$-matrices,
and let $\sym_n^+$ and $\sym_n^{++}$ denote its subsets
of positive semidefinite and positive definite matrices.

\begin{definition}\label{de:kstable}
	A polynomial $f \in \C[\mathbf{z}]$ is called \emph{$K$-stable}, if $f(\mathbf{z})\neq0$ whenever $\Im(\mathbf{z})\in \inter K$.
	
  If $f \in \C[Z]$ on the symmetric matrix variables
  $Z = (z_{ij})_{n \times n}$ is $\sym_n^+$-stable, then 
  $f$ is called \emph{positive semidefinite-stable} (for short, \emph{psd-stable}).
\end{definition}

Equivalently, a polynomial $f\in \C[Z]$ on the symmetric matrix variables 
$Z = (z_{ij})_{n \times n}$ is psd-stable if there does not
exist a matrix $Z$ in the Siegel upper half-space $\mathcal{H}_n$ with $f(Z) = 0$.
Note that psd-stability generalizes the usual stability in the sense that 
a polynomial $f(z_1,\ldots,z_n)$ is stable 
if and only if $f(\diag(z_1,\ldots,z_n))$ is psd-stable.

\begin{example} \label{ex:ex1}
	(i) Let $f \in \R[\mathbf{z}]$ be given by $f(\mathbf{z})=\mathbf{a}^T \mathbf{z}+b$, 
where $\mathbf{a}$ is a real $n$-dimensional vector and $b\in\R$.
Then $f$ is $K$-stable if and only if 
$\mathbf{a}\in \inter K^\ast$ or $-\mathbf{a} \in \inter K^{\ast}$, where 
$K^{\ast} = \{\mathbf{y} \in \R^n \, : \, 
  \langle \mathbf {x}, \mathbf{y} \rangle \ge 0 \text{ for all } \mathbf{x} \in K\}$ 
denotes the dual cone 
of $K$ and $\langle \cdot, \cdot \rangle$ is the Euclidean dot product on $\R^n$.

Namely, if $\mathbf{a} \in \inter K^{\ast}$ or $-\mathbf{a} \in \inter K^{\ast}$, 
say, $\mathbf{a} \in \inter K^{\ast}$, then for any $\mathbf{z} = \mathbf{x} + i \mathbf{y} \in \C^n$ with $\mathbf{y} \in \inter K$ we have
\[
  f(\mathbf{z}) \ = \ \langle \mathbf{a}, \mathbf{x} \rangle + i \langle \mathbf{a}, \mathbf{y} \rangle + b, \, 
\]
because $\mathbf{a}$ is real. Hence $\Im f(\mathbf{z}) \neq 0$, and thus $f$ is $K$-stable.

Conversely, let $f$ be $K$-stable. Assuming $\mathbf{a} \not\in \pm \inter K^{\ast}$, there exists 
$\mathbf{y}' \in \inter K$ with $\langle \mathbf{a}, \mathbf{y'} \rangle \le 0$ and 
$\mathbf{y}'' \in \inter K$ with $\langle \mathbf{a}, \mathbf{y}'' \rangle \ge 0$. Hence, there exists
some $\mathbf{y} \in \inter K$ with $\langle \mathbf{a}, \mathbf{y} \rangle = 0$. Choosing
$\mathbf{x} \in \R^n$ with $\langle \mathbf{a},\mathbf{x} \rangle + b = 0$ gives a contradiction to the stability 
of $f$.

	For usual stability, this implies the well-known statement that $f=\mathbf{a}^T\mathbf{z}+b$ is stable if and only if $\mathbf{a}\in(\R_{>0})^n$ or $- \mathbf{a}\in(\R_{>0})^n$.
 For psd-stability, this implies that $f(Z)=\left<Z,A\right>+b$ with
 $A \in \sym_n$ is psd-stable if and only if $A\succ 0$ or $-A \succ 0$; 
 here, the scalar product is $\langle Z, A \rangle = \tr(A^H Z) = \tr(A Z)$
 and $A^H$ is the Hermitian transpose of $A$.
	
	(ii)
	As an example for psd-stability, the polynomial $f(Z) = \det Z$ on the set of symmetric
       $n \times n$-matrices is psd-stable. We postpone the proof to 
	Example~\ref{ex:ex2} below.
\end{example}

\begin{example}\label{ex:ex3}
For the polynomial 
\begin{eqnarray*}
  f(z_1,z_2,z_3)  & = & \det \left( \begin{pmatrix} 1 & 0 \\ 0 & 1 \end{pmatrix} z_1 + 
              \begin{pmatrix} 0 & 1 \\ 1 & 0 \end{pmatrix} z_2 + 
              \begin{pmatrix} 1 & 0 \\ 0 & 1 \end{pmatrix} z_3 \right) \\
  & = & (z_1 + z_3)^2 - z_2^2 \\
  & = & (z_1 + z_3 - z_2)(z_1+z_3 +z_2) \, ,
\end{eqnarray*}
Section~\ref{se:imag-proj} implies that the imaginary projection is
\[
  \mathcal{I}(f) \ = \ \{\mathbf{y} \in \R^3 \, : \, y_1 - y_2 + y_3 = 0\}
    \cup \{ \mathbf{y} \in \R^3 \, : \, y_1 + y_2 + y_3 = 0\} \, .
\]
Since, for example, $(\frac{1}{2},1,\frac{1}{2}) \in \mathcal{I}(f) \cap \R_{>0}^3$,
$f$ is not stable.
In contrast to this, setting $Z = \begin{pmatrix}z_1 & z_2 \\
  z_2 & z_3 \end{pmatrix}$, the polynomial $f(Z) = f(z_1, z_2, z_3)$
is psd-stable. Namely, for $\mathbf{y} \in \mathcal{I}(f)$,
we have
\[
  \det
  \begin{pmatrix}
    y_1 & \pm (y_1 + y_3) \\
    \pm (y_1 + y_3) & y_3
  \end{pmatrix}
  \ = \ y_1 y_3 - (y_1 + y_3)^2
  \ \le \ 0
\]
as a consequence of the arithmetic-geometric mean inequality, hence
$\mathbf{y} \not\in \inter \sym_2^+$.
\end{example}

The following lemma allows to reduce multivariate $K$-stability to univariate stable polynomials.
\begin{lemma}\label{lemma:K-stability-univariate}
A polynomial $f \in \C[\mathbf{z}] \setminus \{0\}$ 
is $K$-stable if and only if for all $\mathbf{x},\mathbf{y}\in \R^n$ with 
$\mathbf{y}\in \inter K$ the univariate polynomial $t\mapsto f(\mathbf{x}+t\mathbf{y})$ is stable.
\end{lemma}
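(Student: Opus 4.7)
The plan is to unfold the definitions on both sides and use that $\inter K$, being the interior of a convex cone, is closed under multiplication by positive reals.

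For the forward direction, I would assume $f$ is $K$-stable and fix $\mathbf{x}, \mathbf{y} \in \R^n$ with $\mathbf{y} \in \inter K$. Writing $t = a + ib$ with $b > 0$, I would compute $\Im(\mathbf{x} + t\mathbf{y}) = b\mathbf{y}$, which lies in $\inter K$ because $\inter K$ is invariant under scaling by positive reals. Hence $f(\mathbf{x}+t\mathbf{y}) \neq 0$ for every $t$ with $\Im(t) > 0$. A small subtlety is that stability (as defined in the excerpt) excludes the zero polynomial, so I should separately rule out $t \mapsto f(\mathbf{x}+t\mathbf{y}) \equiv 0$. If the univariate polynomial were identically zero, then in particular $f(\mathbf{x}+i\mathbf{y}) = 0$ with $\Im(\mathbf{x}+i\mathbf{y}) = \mathbf{y} \in \inter K$, directly contradicting $K$-stability of $f$.

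For the converse, I would assume every such univariate restriction is stable and show $f(\mathbf{z}) \neq 0$ whenever $\Im(\mathbf{z}) \in \inter K$. Given such a $\mathbf{z}$, I would decompose $\mathbf{z} = \mathbf{x} + i\mathbf{y}$ with $\mathbf{x} = \Re(\mathbf{z})$ and $\mathbf{y} = \Im(\mathbf{z}) \in \inter K$, then consider the univariate polynomial $p(t) = f(\mathbf{x} + t\mathbf{y})$. By hypothesis $p$ is stable, and since $\Im(i) = 1 > 0$, we get $p(i) = f(\mathbf{z}) \neq 0$.

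Neither direction involves a real obstacle; the whole lemma is essentially a restatement of the definition, with the only thing to be careful about being the zero-polynomial convention for stability, which is handled by the short argument above. No deeper tool such as Hurwitz's theorem is needed, because the argument is purely pointwise.
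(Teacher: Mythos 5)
Your proof is correct and is essentially the paper's argument written in direct rather than contrapositive form: both directions come down to the observation that $\Im(\mathbf{x}+(\alpha+i\beta)\mathbf{y})=\beta\mathbf{y}\in\inter K$ for $\beta>0$. Your explicit handling of the identically-zero restriction is a welcome extra precision that the paper leaves implicit.
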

\begin{proof}
If $f$ is not $K$-stable, then there exists $\mathbf{x} \in \R^n$ and 
$\mathbf{y} \in \inter K$ with
$f(\mathbf{x} + i \mathbf{y}) = 0$.
Hence, $i$ is a zero of the univariate polynomial $t \mapsto f(\mathbf{x} + t \mathbf{y})$
and thus that univariate polynomial is not stable.
 
Conversely, if $t \mapsto f(\mathbf{x}+t\mathbf{y})$ is not stable for $\mathbf{y}\in \inter K$,
 then there is some $\alpha+i\beta\in\C$ with $\beta>0$ and $0=f(\mathbf{x}+(\alpha+i\beta)\mathbf{y})=f(\mathbf{x}+\alpha\mathbf{y}+i\beta\mathbf{y})$. Since $\beta\mathbf{y}\in \inter K$, 
$f$ is not $K$-stable.
\end{proof}

As reviewed in Section~\ref{se:imag-proj},
for a homogeneous polynomial $f$, every component in the complement of the 
imaginary projection $\mathcal{I}(f)$ is a hyperbolicity cone. 
In particular, $f$ is stable if and only if $f$ is hyperbolic with respect to every point in the 
positive orthant \cite{garding-59}. This generalizes as follows. 

\begin{thm}\label{thm:K-stable-hyperbolic}
	Let $f \in \C[\mathbf{z}]$ be homogeneous. 
Then the following are equivalent:
	\begin{enumerate}
		\item $f$ is $K$-stable.
		\item $\mathcal{I}(f) \cap \inter K = \emptyset$. 
		\item $f$ is hyperbolic with respect to every point in $\inter K$.
	\end{enumerate}
\end{thm}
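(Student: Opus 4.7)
The plan is to split the equivalence into the trivial (1)$\Leftrightarrow$(2) and the substantive (2)$\Leftrightarrow$(3), where the latter is where the homogeneity hypothesis enters through the result from Section~\ref{se:imag-proj} that for homogeneous $f$ the connected components of $\mathcal{I}(f)^{\compl}$ are precisely the hyperbolicity cones of $f$.

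For (1)$\Leftrightarrow$(2), I would simply unfold the definition: $f$ is $K$-stable iff there is no $\mathbf{z} \in \C^n$ with $\Im(\mathbf{z}) \in \inter K$ and $f(\mathbf{z}) = 0$, which by the definition~\eqref{eq:im1} of the imaginary projection is the same as saying $\inter K$ misses $\mathcal{I}(f)$. No homogeneity is needed here.

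For (2)$\Rightarrow$(3), suppose $\mathcal{I}(f) \cap \inter K = \emptyset$, so that $\inter K \subseteq \mathcal{I}(f)^{\compl}$. Since $\inter K$ is an open convex set, it is connected, and therefore lies entirely inside one connected component $C$ of $\mathcal{I}(f)^{\compl}$. By the characterization recalled in Section~\ref{se:imag-proj} (from \cite{joergens-theobald-2017}), $C$ is a hyperbolicity cone of $f$, so $f(\mathbf{e}) \neq 0$ and $t \mapsto f(\mathbf{x}+t\mathbf{e})$ has only real roots for every $\mathbf{e} \in C$ and every $\mathbf{x} \in \R^n$; in particular this holds for every $\mathbf{e} \in \inter K$.

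For (3)$\Rightarrow$(2), pick any $\mathbf{e} \in \inter K$. By assumption $f$ is hyperbolic with respect to $\mathbf{e}$, and the hyperbolicity cone $C(\mathbf{e})$ contains $\mathbf{e}$ and coincides with the component of $\mathcal{I}(f)^{\compl}$ containing $\mathbf{e}$ (again by the result from Section~\ref{se:imag-proj}). Hence $\mathbf{e} \in \mathcal{I}(f)^{\compl}$, and varying $\mathbf{e}$ over $\inter K$ gives $\inter K \subseteq \mathcal{I}(f)^{\compl}$, which is~(2). The only nontrivial ingredient in the whole argument is the cited identification of components with hyperbolicity cones; everything else is the connectedness of $\inter K$ plus an unpacking of the definitions, so there is no real obstacle to overcome beyond invoking that prior result.
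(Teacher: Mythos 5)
Your argument is correct, but it takes a genuinely different route from the paper's. The paper proves (1)$\Leftrightarrow$(2) exactly as you do and then establishes (1)$\Leftrightarrow$(3) by a direct, self-contained computation: if $t\mapsto f(\mathbf{x}+t\mathbf{e})$ has a non-real root $\alpha+i\beta$, homogeneity allows one to pass to the zero $-\mathbf{x}-(\alpha+i\beta)\mathbf{e}$ and thereby normalize $\beta>0$, which yields $f(\mathbf{x}+\alpha\mathbf{e}+i\beta\mathbf{e})=0$ with $\beta\mathbf{e}\in\inter K$; the degenerate case $f(\mathbf{e})=0$ is handled by observing that then $f(i\mathbf{e})=0$ as well. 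Your proof instead outsources the substance of (2)$\Leftrightarrow$(3) to the theorem from \cite{joergens-theobald-2017} that for homogeneous $f$ the hyperbolicity cones are precisely the components of $\mathcal{I}(f)^{\compl}$, combined with the connectedness of $\inter K$, G{\aa}rding's fact that $f$ is hyperbolic with respect to every point of a hyperbolicity cone, and the observation that $\mathbf{e}\in C(\mathbf{e})$. That is legitimate --- the paper recalls this correspondence in Section~\ref{se:imag-proj} --- but note that it is a strictly stronger ingredient than needed: the cited theorem already essentially contains the pointwise equivalence between $\mathbf{e}\notin\mathcal{I}(f)$ and hyperbolicity in direction $\mathbf{e}$, so your proof is shorter only modulo a heavier black box. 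The paper's version has the advantage of being elementary and of making explicit where homogeneity enters (the sign normalization of $\beta$ and the case $f(\mathbf{e})=0$); yours has the advantage of brevity and of exhibiting the theorem as an immediate corollary of the component/hyperbolicity-cone correspondence.
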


\begin{proof}
	The equivalence $(1) \Leftrightarrow (2)$ is clear. 

 ``$(3)\Rightarrow(1)$'' 
If $f$ is not $K$-stable, then there exists $\mathbf{x} \in \R$ and $\mathbf{e} \in \inter K$ 
with $f(\mathbf{x} + i \mathbf{e}) = 0$. Hence, $i$ is a root of 
$t \mapsto f(\mathbf{x} + t \mathbf{e})$, so that $f$ is not
hyperbolic with respect to $\mathbf{e}$.
 
 ``$(1)\Rightarrow(3)$'' Assume $t\mapsto f(\mathbf{x}+t\mathbf{e})$ is not hyperbolic for $\mathbf{e}\in \inter K$. In case $f(\mathbf{e}) = 0$, the point $i\mathbf{e}$ 
is a root of the homogeneous polynomial $f$ as well,
so that $f$ is not hyperbolic then. Hence, $f(\mathbf{e}) \neq 0$ and there is $\mathbf{x} \in \R$
and $\alpha+i\beta\in\C$ with $\beta \neq 0$ and 
$f(\mathbf{x}+(\alpha+i\beta)\mathbf{e}) = 0$. We can assume that $\beta > 0$, since
$- \mathbf{x} - (\alpha + i \beta) \mathbf{e}$ is a zero of $f$, too. Hence,
$0=f(\mathbf{x}+\alpha\mathbf{e}+i\beta\mathbf{e})$ and $\beta\mathbf{e}\in \inter K$, 
so that $f$ is not $K$-stable.
\end{proof}

The following consequence of the connection between $K$-stability and the imaginary
projection explains that the convexity assumption in the stability notion
is natural.

\begin{cor}
	If $f$ is $K$-stable for a non-convex cone $K$ with non-empty, 
	connected interior, 
	then it is $\cl(\conv(\inter(K)))$-stable, where $\cl(\cdot)$ and $\conv(\cdot)$
	denote the closure and the convex hull.
\end{cor}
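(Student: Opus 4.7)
The plan is to leverage the structural result from Section~\ref{se:imag-proj} (due to \cite{TTT}) that the components of $(\cl\mathcal{I}(f))^{\compl}$ are convex, and combine it with the connectedness of $\inter K$.

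First I would upgrade the $K$-stability hypothesis: $\mathcal{I}(f)\cap \inter K=\emptyset$ in fact implies the a priori stronger statement $\cl\mathcal{I}(f)\cap \inter K=\emptyset$. Indeed, since $\inter K$ is open, any point of $\cl\mathcal{I}(f)\cap\inter K$ would be a limit of points of $\mathcal{I}(f)$ which, for sufficiently large indices, must themselves lie in $\inter K$, contradicting the hypothesis.

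Second, because $\inter K$ is connected and contained in the open set $(\cl\mathcal{I}(f))^{\compl}$, it is contained in a single connected component $C$ of that complement. Since $C$ is convex by Section~\ref{se:imag-proj}, we obtain $\conv(\inter K)\subseteq C$.

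Third, $\inter K$ (and hence $\conv(\inter K)$) has non-empty interior in $\R^n$ because $K$ has non-empty interior, so the standard convex-geometric identity $\inter(\cl S)=\inter S$ for a convex set $S$ with non-empty interior yields
\[
  \inter\bigl(\cl(\conv(\inter K))\bigr) \ = \ \inter(\conv(\inter K)) \ \subseteq \ \conv(\inter K) \ \subseteq \ C \ \subseteq \ (\cl\mathcal{I}(f))^{\compl}.
\]
In particular $\mathcal{I}(f)\cap \inter(\cl(\conv(\inter K)))=\emptyset$, which is exactly $\cl(\conv(\inter K))$-stability of $f$.

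I do not anticipate any real obstacle; the only mildly subtle point is the first step (passing from $\mathcal{I}(f)$ to its closure), and the rest is routine convex geometry combined with the cited component structure of imaginary projections.
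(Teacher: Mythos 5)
Your proof is correct and follows essentially the same route as the paper's: locate $\inter K$ inside a single convex component $C$ of $(\cl\mathcal{I}(f))^{\compl}$ and deduce $\inter(\cl(\conv(\inter K)))\subseteq C$. Your first step (passing from $\mathcal{I}(f)\cap\inter K=\emptyset$ to $\cl\mathcal{I}(f)\cap\inter K=\emptyset$) and your use of $\inter(\cl S)=\inter S$ make explicit two small points the paper glosses over, but the argument is the same.
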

\begin{proof}
	If $f$ is $K$-stable, then $\mathcal{I}(f) \cap \inter K = \emptyset$, that is,
	$\inter K \subseteq \mathcal{I}(f)^\compl$.
	Since $\inter K$ is connected,
	it is contained in one of the connected components of 
	$(\cl \mathcal{I}(f))^{\compl}$. Denote this component by $C$.
	The convexity of any component in $(\cl \mathcal{I}(f))^{\compl}$  
	(see Section~\ref{se:imag-proj}) implies that for $K' := \cl(\conv(\inter(K)))$,
	we have	$\inter K' \subseteq \conv \inter K \subseteq C$. Since, $C \subseteq (\cl \mathcal{I}(f))^\compl$,
	$f$ is $K'$-stable.
\end{proof}

\begin{example} \label{ex:ex2}
	We complete Example~\ref{ex:ex1} and show that $f(A) = \det A$ on
	the space of (complex) symmetric matrices is psd-stable.
	
	 Let $B \in \sym_n$ be positive definite and consider the univariate polynomial $t\mapsto f(A+tB)$. Its roots are the eigenvalues of the symmetric matrix
$-B^{-1/2}AB^{-1/2}$, where $B^{-1/2}$ denotes the unique square root of $B^{-1}$. Hence, it is real-rooted. Thus, $f$ is hyperbolic with respect to any positive definite matrix. By Theorem \ref{thm:K-stable-hyperbolic}, $f$ is psd-stable.
\end{example}

The following fact generalizes the specialization property of stable polynomials
(see, e.g., \cite[Lemma 2.4]{wagner-2011}). We will use it for the special case $K_1=\R_{\ge 0}$.

\begin{fact}\label{fact:specialization}
	Let $K=K_1\times K_2 \subset \R^n \times \R^m$ be a cone. If $f(\mathbf{z}_1,\mathbf{z}_2)$ is $K$-stable, then $f(\mathbf{a}+i\mathbf{b},\mathbf{z}_2)$ is $K_2$-stable for any $\mathbf{a} \in \R^n$ and $\mathbf{b} \in \inter K_1$.
\end{fact}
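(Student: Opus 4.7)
The plan is to unpack the definition of $K$-stability and exploit the elementary identity $\inter(K_1 \times K_2) = \inter K_1 \times \inter K_2$ for product cones in $\R^n \times \R^m$, which reduces the claim to a direct calculation with imaginary parts.

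Concretely, I would set $g(\mathbf{z}_2) := f(\mathbf{a} + i\mathbf{b}, \mathbf{z}_2)$ and fix arbitrary $\mathbf{x}_2 \in \R^m$ together with $\mathbf{y}_2 \in \inter K_2$. The point $(\mathbf{a} + i\mathbf{b},\, \mathbf{x}_2 + i\mathbf{y}_2) \in \C^n \times \C^m$ has imaginary part $(\mathbf{b}, \mathbf{y}_2)$, which lies in $\inter K_1 \times \inter K_2 = \inter K$ because $\mathbf{b} \in \inter K_1$ by hypothesis. The $K$-stability of $f$ then yields $g(\mathbf{x}_2 + i\mathbf{y}_2) = f(\mathbf{a}+i\mathbf{b},\, \mathbf{x}_2+i\mathbf{y}_2) \neq 0$, which is exactly the non-vanishing required for $K_2$-stability.

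The only subtlety worth flagging is the convention (recorded at the start of Section~\ref{se:prelim}) that the zero polynomial is not considered stable, so I must rule out $g \equiv 0$ as an element of $\C[\mathbf{z}_2]$. This is immediate: if $g$ were identically zero, then it would vanish in particular at some point $\mathbf{x}_2 + i\mathbf{y}_2$ with $\mathbf{y}_2 \in \inter K_2$ (which is non-empty because $K$ is proper, hence so are $K_1$ and $K_2$), contradicting the non-vanishing just established. I do not anticipate a real obstacle here — the argument is essentially a direct translation of the product structure of interiors, and the whole proof should amount to a few lines.
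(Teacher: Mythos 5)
Your argument is correct and is exactly the intended one: the paper states this Fact without proof (as a direct generalization of the classical specialization property), and the whole content is the identity $\inter(K_1\times K_2)=\inter K_1\times\inter K_2$ together with the definition of $K$-stability, which is what you use. Your handling of the zero-polynomial convention is also right, since the hypothesis $\mathbf{b}\in\inter K_1$ (rather than $\mathbf{b}\in K_1$) is precisely what makes the specialized polynomial automatically nonzero, avoiding the ``or identically zero'' caveat and the Hurwitz argument that the boundary case would require.
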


\section{A conic generalization of the HKO Theorem\label{se:hb-and-hko}}

We show that the Theorem of Hermite-Kakeya-Obreschkoff as given in 
Proposition~\ref{thm:HKO-univariate}
can be generalized to conic stability.

For $f, g \in \R[\mathbf{z}]$, we write $f \ll_K g$ if $g + if$ is $K$-stable.
First we generalize the auxiliary result in \cite[Lemma 2.8]{borcea-braenden-leeyang1}
(see also \cite[Proposition 2.7]{wagner-2011}).

\begin{thm}\label{thm:HB-for-K-stability}
	Let $f$ and $g$ be real polynomials in $\mathbf{z} = (z_1, \ldots, z_n)$. 
Then $g+if$ is $K$-stable if and only if $g+wf \in \R[\mathbf{z},w]$ is $K'$-stable, where $K'=K\times \R_{\ge 0}$.
\end{thm}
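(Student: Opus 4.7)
The plan is to treat the two directions separately. For the easy direction $(\Leftarrow)$, I would simply specialize $w = i$: since $\Im(i) = 1 \in \R_{>0}$, the $K'$-stability of $g + wf$ gives $g(\mathbf{z}) + i f(\mathbf{z}) \neq 0$ for every $\mathbf{z}$ with $\Im \mathbf{z} \in \inter K$, hence $g + if$ is $K$-stable. (Equivalently, this is the case $\mathbf{a} = 0$, $\mathbf{b} = 1$ of Fact~\ref{fact:specialization}.)

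For the nontrivial direction $(\Rightarrow)$, my plan is to reduce to the univariate setting via Lemma~\ref{lemma:K-stability-univariate} and then close the argument with a short Wronskian computation combined with Hermite-Biehler. By that lemma applied to $g + wf$ and the cone $K' = K \times \R_{\ge 0}$, I must show that for every $(\mathbf{x}, a) \in \R^{n+1}$ and every $(\mathbf{y}, b) \in \inter K \times \R_{>0}$ the univariate polynomial
\[
  h(t) \ := \ G(t) + (a + b t)\, F(t),
  \qquad G(t) := g(\mathbf{x} + t\mathbf{y}), \quad F(t) := f(\mathbf{x} + t\mathbf{y}),
\]
is stable in $t$. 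The hypothesis, applied through the same lemma, tells me that $G(t) + iF(t)$ is stable for every such $(\mathbf{x}, \mathbf{y})$.

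Assume towards a contradiction that $h(t_0) = 0$ for some $t_0$ with $\Im t_0 > 0$. If $F(t_0) = 0$, then also $G(t_0) = 0$, so $(G + iF)(t_0) = 0$, contradicting the stability of $G + iF$. Otherwise $w_0 := a + b t_0 = -G(t_0)/F(t_0)$ satisfies $\Im w_0 = b\, \Im t_0 > 0$; writing $w_0 = \alpha + i\beta$ with $\beta > 0$, bilinearity of the Wronskian together with $W(F, F) = 0$ yields
\[
  W(\beta F,\, G + \alpha F) \ = \ \beta\, W(F, G).
\]
By Hermite-Biehler (Proposition~\ref{thm:HB}) applied to the stable polynomial $G + iF$, one has $W(F, G) \le 0$ on $\R$ and real stability of $F$ and $G$; since $F$ then interlaces $G$, the univariate HKO Theorem (Proposition~\ref{thm:HKO-univariate}) guarantees that $G + \alpha F$ is real stable as well. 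Hence $\beta F \ll G + \alpha F$, and Hermite-Biehler in the reverse direction produces the stability of $(G + \alpha F) + i(\beta F) = G + w_0 F$. But $t_0$ with $\Im t_0 > 0$ is a root of $G + w_0 F$, a contradiction.

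The step I expect to be the heart of the matter is precisely this Wronskian bootstrap: one must upgrade stability of $G + iF$ (known only at the single value $w = i$) to stability of $G + w_0 F$ for an arbitrary $w_0$ in the open upper half-plane, even though $w_0 = a + b t_0$ itself depends on the putative root. The bilinearity of $W$ together with the two-way use of Hermite-Biehler handles this cleanly, while the degenerate subcases where $F$ or $G$ vanishes identically in $t$ are ruled out directly from the stability hypothesis.
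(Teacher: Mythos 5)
Your proposal is correct and uses essentially the same argument as the paper: reduce to univariate restrictions via Lemma~\ref{lemma:K-stability-univariate}, extract interlacing of $F$ and $G$ from Hermite--Biehler, use the univariate HKO Theorem to get real stability of $G+\alpha F$, and then the Wronskian identity $W(\beta F, G+\alpha F)=\beta W(F,G)\le 0$ together with a second application of Hermite--Biehler to conclude stability of $G+w_0F$. The only difference is organizational: you argue by contradiction, letting the putative root $t_0$ determine $w_0=a+bt_0$ in the upper half-plane (which, if anything, applies the univariate reduction for $K'=K\times\R_{\ge 0}$ more carefully than the paper, where $w$ is fixed from the outset).
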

\begin{proof}
	``$\Leftarrow$'' This follows from Fact \ref{fact:specialization}, setting $w=i$.
	
	``$\Rightarrow$'' Let $g+if$ be $K$-stable.
      By Lemma \ref{lemma:K-stability-univariate}, the univariate polynomial
	\[
	t\mapsto g(\mathbf{x}+t\mathbf{y})+if(\mathbf{x}+t\mathbf{y})
	\]
	is stable for all $\mathbf{x},\mathbf{y}\in \R^n$ with $\mathbf{y}\in \inter K$. 
	For fixed $\mathbf{x}, \mathbf{y} \in \R^n$ with $\mathbf{y} \in \inter K$,
      we write $\tilde{f}(t)=f(\mathbf{x}+t\mathbf{y})$ and $\tilde{g}(t)=g(\mathbf{x}+t\mathbf{y})$ as polynomials in $\R[t]$.
      	By the univariate Hermite-Biehler Theorem~\ref{thm:HB},
      	$\tilde{f}$ interlaces $\tilde{g}$ properly,
      	in particular, 
      	$\tilde{f}$ and $\tilde{g}$ are real stable. Let $w = \alpha + i \beta$
      	with $\alpha \in \R$ and $\beta > 0$.
    By Lemma~\ref{lemma:K-stability-univariate}, we have to show that
    the univariate polynomial
    \begin{equation}
      \label{eq:tildegf}
      t \ \mapsto \ \tilde{g}+\alpha\tilde{f}+i\beta\tilde{f} \ = \ \tilde{g}+(\alpha+i\beta)\tilde{f}
    \end{equation}
    is stable.  
	By the univariate Hermite-Kakeya-Obreschkoff Theorem~\ref{thm:HKO-univariate}, the linear 
	combination $\beta \tilde{f}+\alpha\tilde{g}$ is real stable.
	Then $W(\beta\tilde{f},\tilde{g}+\alpha\tilde{f})=\beta W(\tilde{f},\tilde{g})\leq0$
	on $\R$, and thus we can deduce 
	$\beta\tilde{f}\ll\tilde{g}+\alpha\tilde{f}$.
	Invoking again the univariate Hermite-Biehler Theorem \ref{thm:HB} shows that
	the univariate polynomial~\eqref{eq:tildegf} is stable. This completes the proof.
\end{proof}

\begin{prop}\label{prop:representation-K-stability}
For every $K$-stable polynomial $h=g+if$ with $g,f\in\R[\mathbf{z}]$ the polynomials $f$ and $g$ are $K$-stable or identically zero.
\end{prop}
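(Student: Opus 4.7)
The plan is to apply Hurwitz's Theorem (Proposition~\ref{thm:Hurwitz}) twice, to two one-parameter families of $K$-stable polynomials that degenerate to $g$ and to $f$ respectively. As a first step, I would invoke Theorem~\ref{thm:HB-for-K-stability} to upgrade the $K$-stability of $h = g + if$ to the $K'$-stability of $g(\mathbf{z}) + w f(\mathbf{z}) \in \R[\mathbf{z}, w]$ for $K' = K \times \R_{\ge 0}$. After reordering the factors of the product cone, Fact~\ref{fact:specialization} (applied with specialization $w = 0 + i\lambda$) then produces, for every $\lambda > 0$, a $K$-stable polynomial
\[
  h_\lambda(\mathbf{z}) \ = \ g(\mathbf{z}) + i\lambda f(\mathbf{z}) \, .
\]

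To extract $K$-stability of $g$, I would send $\lambda \to 0^+$ and observe that $h_\lambda \to g$ uniformly on compact subsets of the domain $U := \R^n + i\,\inter K \subset \C^n$. Since every $K$-stable polynomial is, by definition, non-vanishing on $U$, Proposition~\ref{thm:Hurwitz} applies provided $U$ is open and connected: openness follows from openness of $\inter K$, and connectedness follows from convexity of $\inter K$. Therefore $g$ is either non-vanishing on $U$ (hence $K$-stable) or identically zero on $U$; since $U$ is open, the latter forces $g \equiv 0$ as a polynomial.

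For $f$, I would instead rescale by the nowhere-zero constant $(i\lambda)^{-1}$, which does not affect the zero locus, obtaining a $K$-stable polynomial $-ig(\mathbf{z})/\lambda + f(\mathbf{z})$ for every $\lambda > 0$. As $\lambda \to \infty$ this family converges to $f$ uniformly on compact subsets of $U$, and a second application of Proposition~\ref{thm:Hurwitz} yields that $f$ is $K$-stable or identically zero. I do not anticipate a substantive obstacle: the only point requiring care is the verification of the hypotheses of Hurwitz's theorem, in particular the connectedness of $U$, which is guaranteed by the convexity of the interior of the proper cone $K$.
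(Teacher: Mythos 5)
Your proposal is correct and follows essentially the same route as the paper: Theorem~\ref{thm:HB-for-K-stability} to pass to the $K\times\R_{\ge 0}$-stable polynomial $g+wf$, then Hurwitz's Theorem~\ref{thm:Hurwitz} on the limits $w\to 0$ and $w\to\infty$ (the latter after normalizing by $w^{-1}$). The paper states this very tersely; your version merely fills in the details (the specialization via Fact~\ref{fact:specialization}, the rescaling, and the openness/connectedness of $\R^n+i\,\inter K$), all of which check out.
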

\begin{proof}
	By Theorem \ref{thm:HB-for-K-stability}, a non-zero polynomial
	$g+if$ is $K$-stable if and only if $g+yf$ is $K'$-stable with $K' = K \times \R_{\ge 0}$.
 Using Hurwitz's Theorem \ref{thm:Hurwitz}, sending $y\rightarrow0$ and $y\rightarrow\infty$ respectively, gives that $g$ and $f$ are $K$-stable polynomials or identically zero.
\end{proof}

Now we show the following HKO generalization for $K$-stability.

\begin{thm}[Conic HKO Theorem]\label{thm:HKO-for-K-stability}
	Let $f,g \in \R[\mathbf{z}]$. Then $\lambda f+\mu g$ is either $K$-stable or the zero polynomial for all $\lambda, \mu \in \R$
    if and only if $f+ig$ or $g+if$ is $K$-stable or $f \equiv g \equiv 0$.
\end{thm}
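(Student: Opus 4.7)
The plan is to deduce both implications by combining the bridge Theorem~\ref{thm:HB-for-K-stability} with the observation that the slice $D := \{\mathbf{z} \in \C^n : \Im \mathbf{z} \in \inter K\}$ is an open and connected subset of $\C^n$, the connectedness following from the convexity of $\inter K$.

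For the direction ``$\Leftarrow$'', up to the symmetry $(f,g) \mapsto (g,f)$ I may assume that $g + if$ is $K$-stable (the case $f \equiv g \equiv 0$ is trivial). Theorem~\ref{thm:HB-for-K-stability} upgrades this to $(K \times \R_{\ge 0})$-stability of $g(\mathbf{z}) + w f(\mathbf{z})$ as a polynomial in $(\mathbf{z}, w)$, and Fact~\ref{fact:specialization}, applied with $\R_{\ge 0}$ in the role of $K_1$ on the $w$-variable, then gives $K$-stability of $g + (\alpha + i\beta) f$ for every $\alpha \in \R$ and every $\beta > 0$. Letting $\beta \to 0^+$ and invoking Hurwitz's Theorem~\ref{thm:Hurwitz} on $D$, I conclude that $g + \alpha f$ is $K$-stable or identically zero for every $\alpha \in \R$, which already handles every $\lambda f + \mu g$ with $\mu \neq 0$ by rescaling. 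For $\mu = 0$, a further application of Hurwitz to $\alpha^{-1}(g + \alpha f) = f + g/\alpha \to f$ as $\alpha \to \infty$ (or, in the degenerate case that $g$ is a real multiple of $f$, a direct inspection) shows that $f$ itself is $K$-stable or zero.

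For the direction ``$\Rightarrow$'', I first dispose of the degenerate cases $f \equiv 0$, $g \equiv 0$, and $g = cf$ for some $c \in \R$: the first two are immediate, and in the third $g + if = (c+i)f$ is a nonzero scalar multiple of the $K$-stable polynomial $f$. In the remaining general case, both $f$ and $g$ are nonzero and $K$-stable (using $(\lambda, \mu) = (1, 0)$ and $(0, 1)$), and $g + \alpha f \not\equiv 0$ for every $\alpha \in \R$; the hypothesis thus forces each $g + \alpha f$ to be $K$-stable, hence non-vanishing on $D$. On $D$ the function $h := -g/f$ is holomorphic and non-constant, the non-constancy following because any constant value would be forced to be real by comparison of coefficients, contradicting the case assumption. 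The preceding non-vanishing translates to $h(D) \cap \R = \emptyset$. By the open mapping theorem for non-constant holomorphic maps of several complex variables, $h(D)$ is an open subset of $\C \setminus \R$; continuity of $h$ together with the connectedness of $D$ makes $h(D)$ connected, and hence contained in the open upper half-plane $\{\Im w > 0\}$ or in the open lower half-plane $\{\Im w < 0\}$. In the lower case, $i \notin h(D)$, so $g + if \neq 0$ on $D$, i.e., $g + if$ is $K$-stable. In the upper case, $-i \notin h(D)$, so $g - if \neq 0$ on $D$; since $f + ig = i(g - if)$ is a nonzero scalar multiple, $f + ig$ is $K$-stable.

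The principal obstacle is the \emph{globally consistent} choice between the two alternatives ``$g + if$'' and ``$f + ig$''. A naive line-by-line reduction via the univariate Hermite-Biehler and HKO theorems would only supply the local statement that, on every real affine line in direction $\mathbf{y} \in \inter K$, one of the univariate polynomials $\tilde g + i \tilde f$ or $\tilde f + i \tilde g$ is stable, with no a priori guarantee that the choice is uniform across lines. The convexity of $\inter K$, and hence the connectedness of $D$, combined with the open-mapping principle applied to the non-constant holomorphic function $-g/f$, is precisely what upgrades this local dichotomy to a single global choice.
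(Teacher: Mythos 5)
Your proof is correct, and while your ``$\Leftarrow$'' direction coincides with the paper's (both pass through Theorem~\ref{thm:HB-for-K-stability} and the specialization Fact~\ref{fact:specialization}; where the paper invokes Proposition~\ref{prop:representation-K-stability} to extract $g+\lambda f$ from the $K$-stable polynomial $(g+\lambda f)+if$, you re-derive that step with a direct Hurwitz limit $\beta\to 0^+$, which is the same mechanism), your ``$\Rightarrow$'' direction takes a genuinely different route. The paper restricts to lines $t\mapsto \mathbf{x}+t\mathbf{y}$ with $\mathbf{y}\in\inter K$, applies the univariate HKO and Hermite--Biehler theorems to get proper interlacing on each line, and then resolves the ambiguity between the alternatives ``$\tilde f$ properly interlaces $\tilde g$'' and ``$\tilde g$ properly interlaces $\tilde f$'' by a homotopy between two lines realizing opposite alternatives, locating a parameter $\tau$ where the root sets coincide and concluding $g\equiv cf$ there. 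You instead work globally on the tube $D=\R^n+i\inter K$: after disposing of the degenerate cases, the hypothesis makes every $g+\alpha f$ ($\alpha\in\R$) non-vanishing on $D$, so the holomorphic function $-g/f$ omits $\R$ on the connected set $D$, and its (connected) image must lie in a single open half-plane, which pins down which of $g+if$ or $f+ig$ is $K$-stable. This is essentially the Borcea--Br\"and\'en/Wagner argument for the orthant transplanted to the tube over $\inter K$; it buys a cleaner, case-free resolution of the global consistency issue (the only input about $K$ is convexity of $\inter K$, hence connectedness of $D$), whereas the paper's argument stays closer to the univariate real-rooted/interlacing picture at the cost of the somewhat delicate homotopy step. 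One small remark: the open mapping theorem you invoke is not actually needed --- connectedness of $h(D)$ alone already forces $h(D)$ to lie in one component of $\C\setminus\R$.
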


\begin{proof}
``$\Leftarrow$'' Let $g+if$ be $K$-stable and let $\lambda,\mu\in\R$ (the case $f+ig$ can be treated analogously). By Proposition~\ref{prop:representation-K-stability}, we can
assume $\mu \neq 0$, and hence, by factoring $\mu$, it suffices to consider $g+\lambda f$.

By Theorem \ref{thm:HB-for-K-stability}, the polynomial $g+yf$ is $K\times \R_{\ge 0}$-stable. 
Using Fact~\ref{fact:specialization}, we set $y=\lambda+i$, which gives the $K$-stable polynomial $(g+\lambda f)+if$. 
With Proposition \ref{prop:representation-K-stability}, the $K$-stability of $g+\lambda f$ follows.

``$\Rightarrow$'' Assume that $\lambda f+\mu g$ is either $K$-stable or identically zero for all $\lambda,\mu\in\R$. Let $\mathbf{x}+i\mathbf{y}\in\C^n$ with $\mathbf{y}\in \inter K$. We write $\tilde{f}(t)=f(\mathbf{x}+t\mathbf{y})$ and $\tilde{g}(t)=g(\mathbf{x}+t\mathbf{y})$.
Due to Lemma \ref{lemma:K-stability-univariate}, the univariate
polynomial $\lambda\tilde{f}+\mu\tilde{g}$ is stable.
The univariate HKO Theorem \ref{thm:HKO-univariate} implies that 
$\tilde{f}$ and $\tilde{g}$ interlace.

First, assume that $\tilde{f}$ interlaces $\tilde{g}$ 
properly for all $\mathbf{x} + i \mathbf{y} \in \C^n$
with $\mathbf{y} \in \inter K$. 
By the Hermite-Biehler Theorem~\ref{thm:HB}, $\tilde{g}+i\tilde{f}$ is stable for all $\mathbf{x}+i\mathbf{y}\in\C^n$ with $\mathbf{y}\in \inter K$, which implies $K$-stability by Lemma \ref{lemma:K-stability-univariate}.
The case where $\tilde{g}$ interlaces $\tilde{f}$ properly for all $\mathbf{x}+i\mathbf{y}\in\C^n$ 
with $\mathbf{y}\in \inter K$ is treated analogously.

It remains the case where $\tilde{f}$ interlaces $\tilde{g}$ properly for one
$\mathbf{x}_1+i\mathbf{y}_1\in\C^n$ with $\mathbf{y}_1\in \inter K$ and 
$\tilde{g}$ interlaces $\tilde{f}$ properly for another 
$\mathbf{x}_2+i\mathbf{y}_2\in\C^n$ with $\mathbf{y}_2\in \inter K$.
For $0\leq \tau\leq 1$, we consider the homotopies
\[
 \mathbf{x}_{\tau} = \tau \mathbf{x}_1+(1-\tau)\mathbf{x}_2, \quad
 \mathbf{y}_{\tau} =\tau \mathbf{y}_1+(1-{\tau})\mathbf{y}_2.
\]
The roots of $\tilde{f}$ and $\tilde{g}$ vary continuously with ${\tau}$. Since 
$\tilde{f}$ and $\tilde{g}$ 
interlace for all $x + iy \in \C^n$ with $y \in \inter K$,
there must be some $\tau \in [0,1]$ such that the roots of 
$f(\mathbf{x}_{\tau}+t\mathbf{y}_{\tau})$ 
and the roots of $g(\mathbf{x}_{\tau}+t\mathbf{y}_{\tau})$ coincide. Hence,
there is a $c\in\R$ such that $c f (\mathbf{x}_{\tau}+t\mathbf{y}_{\tau})\equiv 
g(\mathbf{x}_{\tau}+t\mathbf{y}_{\tau})$.

Let $h=cf-g$. Then $h(\mathbf{x}_{\tau}+t\mathbf{y}_{\tau})\equiv0$, which implies in particular $h(\mathbf{x}_{\tau}+i\mathbf{y}_{\tau})=0$. Due to the initial hypothesis, the polynomial
$h=cf-g$ is either $K$-stable or identically zero. Since the point $\mathbf{x}_{\tau}+i\mathbf{y}_{\tau}\in \C^n$ is a root of the polynomial $h$ with 
$\mathbf{y}_{\tau}\in \inter K$, $h$ must be identically zero.
This implies $cf\equiv g$.
Since by assumption, $f$ and $g$ are $K$-stable, and since $K$-stable polynomials remain $K$-stable
under multiplication with a complex scalar,
$f+ig$ and $g+if$ are $K$-stable as well or $f \equiv g \equiv 0$.
\end{proof}

For $f,g\in\C[\mathbf{z}]$, we denote by $W_{\mathbf{v}}(f,g):=\partial_{\mathbf{v}}f\cdot g-f\cdot\partial_\mathbf{v}g$ the $\mathbf{v}$-Wronskian of $f$ and $g$, where $\partial_\mathbf{v}$ denotes the directional derivative with respect to $\mathbf{v}$.
This allows to give a 
generalization of \cite[Theorem~2.9]{borcea-braenden-leeyang1} (see also 
\cite[Corollary 2.10]{wagner-2011}) for polyhedral and non-polyhedral
cones in terms of the directional $\mathbf{v}$-Wronskian.

\begin{thm}\label{th:polynonpoly}
For $f,g\in\R[\mathbf{z}] \setminus \{0\}$,
the following are equivalent.
\begin{enumerate}
	\item $g+if$ is $K$-stable.
	\item $g+yf$ is $K\times\R_{\ge 0}$-stable.
	\item $\lambda g+\mu f$ is $K$-stable or the zero polynomial for all $\lambda,\mu\in\R$ and $W_\mathbf{v}(f,g)\leq 0$ on $\R^n$ for all $\mathbf{v}\in\inter K$. 
\end{enumerate}
	If $K$ is a polyhedral cone $K=\cone( \mathbf{v}^{(1)},\ldots,\mathbf{v}^{(k)})$, 
  the statements are also equivalent to
\begin{enumerate}
	\item[$(4)$] $\lambda g+\mu f$ is $K$-stable or the zero polynomial
  for all $\lambda,\mu\in\R$ and $W_{\mathbf{v}^{(j)}}(f,g)\leq 0$ on $\R^n$ for all $j=1,\ldots,k$.
\end{enumerate}
\end{thm}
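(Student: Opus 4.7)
The plan is to push everything down to univariate considerations via Lemma~\ref{lemma:K-stability-univariate} and then apply the univariate Hermite-Biehler and Hermite-Kakeya-Obreschkoff theorems together with the conic HKO already established. The equivalence $(1)\Leftrightarrow(2)$ is exactly Theorem~\ref{thm:HB-for-K-stability}, so no new argument is required.

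For $(1)\Rightarrow(3)$: assume $g+if$ is $K$-stable. The conic HKO Theorem~\ref{thm:HKO-for-K-stability} yields that $\lambda g+\mu f$ is $K$-stable or identically zero for every $\lambda,\mu\in\R$. For the Wronskian condition, fix $\mathbf{v}\in\inter K$ and $\mathbf{x}\in\R^n$ and set $\tilde f(t):=f(\mathbf{x}+t\mathbf{v})$, $\tilde g(t):=g(\mathbf{x}+t\mathbf{v})$. By Lemma~\ref{lemma:K-stability-univariate}, $\tilde g+i\tilde f$ is stable (or one of $\tilde f,\tilde g$ vanishes identically, in which case the required inequality is trivial at that $\mathbf{x}$). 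Hermite-Biehler (Proposition~\ref{thm:HB}) gives $\tilde f\ll\tilde g$, i.e.\ $W(\tilde f,\tilde g)\le 0$ on $\R$. Since $\tilde f'(t)=(\partial_{\mathbf v}f)(\mathbf{x}+t\mathbf{v})$ by the chain rule, evaluation at $t=0$ yields $W_{\mathbf v}(f,g)(\mathbf{x})=W(\tilde f,\tilde g)(0)\le 0$. As $\mathbf{x}$ was arbitrary, $W_{\mathbf v}(f,g)\le 0$ on $\R^n$.

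For $(3)\Rightarrow(1)$: pick arbitrary $\mathbf{x}\in\R^n$ and $\mathbf{y}\in\inter K$, and define $\tilde f,\tilde g$ as above. The $K$-stability of every $\lambda g+\mu f$ together with Lemma~\ref{lemma:K-stability-univariate} implies that $\lambda\tilde g+\mu\tilde f$ is stable or zero for all real $\lambda,\mu$; hence the univariate HKO Theorem~\ref{thm:HKO-univariate} says $\tilde f$ and $\tilde g$ interlace (after handling the degenerate case where one of the restrictions vanishes identically, which is harmless since the other is then real stable or zero and $\tilde g+i\tilde f$ reduces to a stable single polynomial). The assumed inequality on the directional Wronskian specializes to $W(\tilde f,\tilde g)(t)=W_{\mathbf y}(f,g)(\mathbf{x}+t\mathbf{y})\le 0$ for all $t\in\R$, that is $\tilde f\ll\tilde g$. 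Hermite-Biehler now yields that $\tilde g+i\tilde f$ is stable, and applying Lemma~\ref{lemma:K-stability-univariate} in reverse delivers the $K$-stability of $g+if$.

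For the polyhedral refinement, $(3)\Rightarrow(4)$ is immediate by specializing $\mathbf v$ to each generator $\mathbf{v}^{(j)}$. For $(4)\Rightarrow(3)$, the key observation is that the directional Wronskian is linear in its direction, since $\partial_{\mathbf v}=\sum_{i}v_i\partial_i$; hence for $\mathbf v=\sum_{j=1}^k\alpha_j\mathbf{v}^{(j)}\in K$ with $\alpha_j\ge 0$ one has
\[
  W_{\mathbf v}(f,g) \ = \ \sum_{j=1}^k\alpha_j W_{\mathbf{v}^{(j)}}(f,g) \ \le \ 0
\]
pointwise on $\R^n$. Since every point of $\inter K$ admits such a conic combination, the stronger Wronskian condition of (3) follows. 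The main obstacle I anticipate is the bookkeeping of the edge cases where $\tilde f$ or $\tilde g$ happens to vanish identically on some line, but these are easily absorbed into the ``or the zero polynomial'' clauses of the classical Hermite-Biehler and HKO statements.
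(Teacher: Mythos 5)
Your proposal is correct and follows essentially the same route as the paper: $(1)\Leftrightarrow(2)$ via Theorem~\ref{thm:HB-for-K-stability}, $(1)\Leftrightarrow(3)$ by restricting to lines $\mathbf{x}+t\mathbf{v}$ and invoking the univariate Hermite-Biehler and HKO theorems together with the conic HKO theorem, and the polyhedral case via linearity of $\mathbf{v}\mapsto W_{\mathbf{v}}(f,g)$ (the paper phrases this as $(4)\Rightarrow(1)$ on a single line rather than your slightly cleaner $(4)\Rightarrow(3)$, but the mechanism is identical). One small imprecision: in $(3)\Rightarrow(4)$ you cannot literally ``specialize $\mathbf{v}$ to each generator,'' since the generators $\mathbf{v}^{(j)}$ of a pointed polyhedral cone generally lie on the boundary of $K$, not in $\inter K$; you need the continuity (indeed linearity) of $\mathbf{v}\mapsto W_{\mathbf{v}}(f,g)(\mathbf{x})$ to pass to the limit from interior directions, which is exactly the ``continuity argument'' the paper invokes.
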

\begin{proof}
``(1)$\Leftrightarrow$(2)'' follows by Theorem \ref{thm:HB-for-K-stability}.

``(1)$\Rightarrow$(3)'' The first part follows by the conic HKO Theorem \ref{thm:HKO-for-K-stability}. For the second part, let $\mathbf{x}+i\mathbf{v}\in\C^n$ with $\mathbf{v}\in \inter K$. 
By Lemma \ref{lemma:K-stability-univariate}, the univariate restriction
\[
t \mapsto g(\mathbf{x}+t \mathbf{v})+if(\mathbf{x}+t \mathbf{v})
\]
is stable. The univariate
Hermite-Biehler Theorem \ref{thm:HB} implies $f(\mathbf{x}+t\mathbf{v})\ll g(\mathbf{x}+t\mathbf{v})$, i.e.,
\[
0 \ \ge \ W(f(\mathbf{x}+t\mathbf{v}),g(\mathbf{x}+t\mathbf{v})) \ = \ 
  g(\mathbf{x} + t \mathbf{v})^2 \frac{d}{dt} \left( \frac{f(\mathbf{x} + t \mathbf{v})}{g(\mathbf{x} + t \mathbf{v})} \right)
\]
for all $t \in \R$. Now the claim follows from 
\[
W_{\mathbf{v}}(f,g)(\mathbf{x}) \ = \
W\big(f(\mathbf{x}+t\mathbf{v}),g(\mathbf{x}+t\mathbf{v})\big)|_{t=0} \ \leq \ 0 \, .
\]

``(3)$\Rightarrow$(1)'' By the conic HKO Theorem \ref{thm:HKO-for-K-stability}, $f+ig$ or $g+if$ is $K$-stable. 
And by Lemma \ref{lemma:K-stability-univariate} and the univariate Hermite-Biehler 
Theorem~\ref{thm:HB}, 
for all $\mathbf{x}+i\mathbf{v}\in\C^n$ with 
$\mathbf{v}\in \inter K$, the univariate real polynomials 
$\tilde{f}(t) = f(\mathbf{x} + t \mathbf{v})$ and
$\tilde{g}(t) = g(\mathbf{x} + t \mathbf{v})$
interlace. Moreover, the elementary rule 
$\frac{d}{dt} h(\mathbf{x} + t \mathbf{v})
= \frac{\partial}{\partial \mathbf{v}} 
    h(\mathbf{z})|_{\mathbf{z} = \mathbf{x} + t \mathbf{v}}$
gives
\[
  W(\tilde{f}(t),\tilde{g}(t)) \ = \ 
  W_{\mathbf{v}}(f(\mathbf{z}),g(\mathbf{z}))|_{\mathbf{z} = \mathbf{x} + t \mathbf{v}}
  \ \le \ 0
\]
by assumption. Hence, the univariate restrictions
\[
t \mapsto g(\mathbf{x}+t \mathbf{v})+if(\mathbf{x}+t \mathbf{v})
\]
are stable, and thus by Lemma~\ref{lemma:K-stability-univariate}, $g+if$ is $K$-stable.

``(3)$\Rightarrow$(4)'' Since $K = \cone(\mathbf{v}^{(1)}, \ldots, \mathbf{v}^{(k)})$, this
  implication follows immediately from a continuity argument.

``(4)$\Rightarrow$(1)'' Let $\mathbf{x}+i\mathbf{y}\in\C^n$ with $\mathbf{y} \in \inter K$. 
We can assume that $\mathbf{y}\in\cone(\mathbf{v}^{(1)},\ldots,\mathbf{v}^{(n)})$
with linearly independent vectors $\mathbf{v}^{(1)}, \ldots, \mathbf{v}^{(n)}$.
Let $\lambda_1, \ldots, \lambda_n \ge 0$ with $\mathbf{y} = 
  \sum_{j=1}^n \lambda_j \mathbf{v}^{(j)}$.
By the precondition, $f$ and $g$ are $K$-stable,
and, by Theorem \ref{thm:HKO-for-K-stability}, $g+if$ or $f+ig$ is $K$-stable.
By Lemma \ref{lemma:K-stability-univariate}, the 
univariate restriction to $t\mapsto\mathbf{x}+t\mathbf{y}$ is stable. 
Its Wronskian fulfils
\begin{align*}
	W\big(f(\mathbf{x}+\mathbf{y}t),g(\mathbf{x}+\mathbf{y}t)\big) 
	&=g(\mathbf{x} + t \mathbf{v})^2 \frac{d}{dt} 
        \left( \frac{f(\mathbf{x}+\mathbf{y}t)}{g(\mathbf{x}+\mathbf{y}t)} \right) \, .
\end{align*}
Expressing this via
$
  \frac{d}{dt}h(\mathbf{x} + \mathbf{y}t) = 
  \sum_{j=1}^n 
    \lambda_j \frac{\partial}{\partial \mathbf{v}^{(j)}} h(\mathbf{z}) \Big|_{\mathbf{z} = \mathbf{x} + \mathbf{y} t}
$
in terms of directional derivatives, we obtain
\begin{align*}
 W\big(f(\mathbf{x}+\mathbf{y}t),g(\mathbf{x}+\mathbf{y}t)\big)  & = 
  g(\mathbf{x} + t \mathbf{v})^2 \sum_{j=1}^n \lambda_j \frac{\partial}{\partial \mathbf{v}^{(j)}} \left( 
    \frac{f(\mathbf{z})}{g(\mathbf{z})} \right) \Big|_{\mathbf{z} = \mathbf{x} + \mathbf{y} t} \\
  & =  \sum_{j=1}^n \lambda_j W_{\mathbf{v}^{(j)}}(f,g)(\mathbf{x}+\mathbf{y}t) \ \leq 0 \ .
\end{align*}

Hence, $f(\mathbf{x}+\mathbf{y}t)\ll g(\mathbf{x}+\mathbf{y}t)$, and thus
by the Hermite-Biehler Theorem \ref{thm:HB}, 
$g(\mathbf{x}+\mathbf{y}t)+if(\mathbf{x}+\mathbf{y}t)$ is stable.
By Lemma \ref{lemma:K-stability-univariate}, $g+if$ is $K$-stable. 
\end{proof}

\section{psd-stability\label{se:psdstab}}

In this section we consider the cone $K = \sym_n^+$ of positive semidefinite
matrices. In many settings, this cone provides a natural generalization of
the non-negative cone (see, e.g., \cite{bpt-2013}).
 In Theorem~\ref{thm:psd-stability-determinantal-polynomial}, 
we provide a generalization
of a stability criterion for determinantal polynomials to the psd-stability.

Recall that for two matrices, $A=(a_{ij})_{n_1\times n_2}$ and $B=(b_{ij})_{k_1\times k_2}$,
the \emph{Kronecker product} (or \emph{tensor product}) $A\otimes B$ is the 
$n_1 k_1 \times n_2 k_2$ block matrix $C=(C_{ij})_{n_1\times n_2}$ with blocks $C_{ij}=a_{ij}B$. A generalization of the 
Kronecker product is the Khatri-Rao product, which is defined as follows.

\begin{definition}\label{def:Kathri-Rao}
	Let $A=(A_{ij})_{n_1\times n_2}$ and $B=(B_{ij})_{n_1\times n_2}$ be block matrices with $n_1\times n_2$ blocks of size $p_1\times p_2$ and $q_1\times q_2$, respectively. The \emph{Khatri-Rao} product of $A$ and $B$ is defined as
	\[
	A\ast B \ = \ (A_{ij}\otimes B_{ij})_{n_1\times n_2},
	\] 
	which is a block matrix with $n_1 \times n_2$ blocks of size $p_1q_1\times p_2q_2$.
\end{definition}
Note that in the case of $p_1=p_2=1$, the Khatri-Rao product provides a scalar multiplication of the blocks $B_{ij}$ by the scalars $a_{ij}$.
And in the case $n_1=n_2=1$, $A$ and $B$ only consist of a single block and 
$A \ast B$ gives the usual Kronecker product.

While it is classically known that the Kronecker product of positive semidefinite matrices is
positive semidefinite \cite{horn-johnson-2013}, 
the following result on the Khatri-Rao product will be relevant in our situation
\cite{liu1999}.

\begin{prop}[Liu, Theorems~5 and 6 in \cite{liu1999}]
\label{thm:Khatri-Rao}
	Let $A = (A_{ij})_{n_1 \times n_2}$ and $B = (B_{ij})_{n_1 \times n_2}$ be 
	block matrices with the same block structure $n_1 \times n_2$. 
If $A$ and $B$ are positive semidefinite, then $A\ast B$ is positive semidefinite. If $A$ is positive semidefinite with
		 positive definite blocks on the diagonal and $B$ is positive definite, then $A\ast B$ is positive definite.
\end{prop}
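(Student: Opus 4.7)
The plan is to derive both claims from two well-known properties of the ordinary Kronecker product: that $A \otimes B$ is positive semidefinite (resp.\ positive definite) whenever $A$ and $B$ are, and that positive (semi)definiteness is preserved by passing to principal submatrices.

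For the first statement, I would identify $A \ast B$ as a principal submatrix of $A \otimes B$ via a suitable index selection. Indexing the entries of $A \otimes B$ by tuples of the form $((i_1, \alpha), (i_2, \gamma))$, one computes
\[
  (A \otimes B)_{((i_1, \alpha), (i_2, \gamma)),\, ((j_1, \beta), (j_2, \delta))} \ = \ (A_{i_1 j_1})_{\alpha \beta}\, (B_{i_2 j_2})_{\gamma \delta},
\]
whereas, with the natural indexing $(i, (\alpha, \gamma))$ of $A \ast B$, one has $(A \ast B)_{(i,(\alpha,\gamma)),(j,(\beta,\delta))} = (A_{ij})_{\alpha\beta}\,(B_{ij})_{\gamma\delta}$. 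Restricting $A \otimes B$ to rows and columns with $i_1 = i_2$ and $j_1 = j_2$ therefore produces $A \ast B$ as a principal submatrix, and the positive semidefiniteness of $A \ast B$ follows immediately.

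For the second statement the principal-submatrix reasoning alone is not enough, since a principal submatrix of a positive definite matrix can drop rank. Instead I would use a perturbation combined with the bilinearity of $\ast$. Since $B$ is positive definite, there is some $\epsilon > 0$ with $B - \epsilon I \succeq 0$, where $I$ denotes the identity matrix of the size of $B$ equipped with its natural block structure ($I_{q_1}$ on each diagonal block, zero off-diagonal). Writing
\[
  A \ast B \ = \ A \ast (B - \epsilon I) \; + \; \epsilon \cdot (A \ast I),
\]
the first summand is positive semidefinite by the first part, while $A \ast I$ is block-diagonal with diagonal blocks $A_{ii} \otimes I_{q_1}$. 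By hypothesis each $A_{ii}$ is positive definite, and since the Kronecker product of positive definite matrices is positive definite, $A \ast I$ is itself positive definite. A sum of a positive semidefinite and a positive definite matrix is positive definite, which completes the proof.

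The main obstacle is the multi-index bookkeeping needed in the first step to pin down the principal-submatrix embedding of $A \ast B$ inside $A \otimes B$; once that identification is cleanly in place, the remainder is a bilinearity argument together with standard facts about the Kronecker product.
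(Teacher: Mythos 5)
The paper does not actually prove this proposition; it is imported verbatim from Liu's paper (Theorems~5 and~6 of \cite{liu1999}), so your proposal has to stand on its own --- and it does. For the first claim, your identification of $A\ast B$ as the principal submatrix of $A\otimes B$ obtained by restricting to indices with $i_1=i_2$ and $j_1=j_2$ is correct and is essentially the standard route (Liu phrases it via the Tracy--Singh product and a selection matrix $Z$ with $Z^TZ=I$, which is the same mechanism). For the second claim, the decomposition $A\ast B=A\ast(B-\epsilon I)+\epsilon\,(A\ast I)$, with the first summand psd by part one and $A\ast I=\blkdiag(A_{11}\otimes I,\dots,A_{nn}\otimes I)$ positive definite by the hypothesis on the diagonal blocks, is a clean and fully correct elementary argument. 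One inaccuracy in your motivating remark: a principal submatrix of a positive \emph{definite} matrix cannot drop rank --- it is always positive definite (restrict the quadratic form to zero-padded vectors). The actual reason the submatrix argument alone does not yield the second claim is that there $A$ is only assumed positive \emph{semidefinite}, so $A\otimes B$ is merely psd and its principal submatrices are merely psd. This slip is confined to the motivation and does not affect the validity of the proof you give.
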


Note that the positive semidefiniteness of $A$ implies that its blocks satisfy
$A_{ij} = A_{ji}^H$, where $A_{ij}^{H}$ denotes the Hermitian transpose of 
$A_{ij}$.
Now we show the following generalization of Proposition~\ref{pr:detcrit}
to psd-stability.

\begin{thm}\label{thm:psd-stability-determinantal-polynomial}
	Let $A = (A_{ij})_{n \times n}$ be a block matrix with $n \times n$ blocks of
	size $d \times d$.
If $A$ is positive semidefinite and $B$ is a Hermitian $d \times d$-matrix, 
then the polynomial $f(Z)=\det(\sum_{i,j=1}^n A_{ij}z_{ij} + B)$ on the set of 
symmetric $n \times n$-matrices is psd-stable or identically zero.
\end{thm}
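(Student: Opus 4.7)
The plan is to reduce psd-stability of $f$ to a univariate stability question via Lemma \ref{lemma:K-stability-univariate}, then settle the strictly positive definite case using Proposition \ref{thm:Khatri-Rao}, and finally recover the general positive semidefinite case by a limiting argument using Hurwitz's Theorem \ref{thm:Hurwitz}. Fix $X, Y \in \sym_n$ with $Y \succ 0$. The univariate restriction is $t \mapsto f(X + tY) = \det(M + B + tN)$, where $M := \sum_{i,j} x_{ij} A_{ij}$ and $N := \sum_{i,j} y_{ij} A_{ij}$; since $A$ is Hermitian (so $A_{ij} = A_{ji}^H$) and $X, Y$ are real symmetric, both $M$ and $N$ are Hermitian. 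By Lemma \ref{lemma:K-stability-univariate}, it suffices to show that every such univariate polynomial is stable (or that $f$ is the zero polynomial).

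First I would assume $A \succ 0$. The key intermediate claim is $N \succ 0$. By Proposition \ref{thm:Khatri-Rao} the Khatri-Rao product $A \ast Y = (y_{ij} A_{ij})_{n \times n}$ is positive definite as an $nd \times nd$ matrix with $d \times d$ blocks (take $A$ as PSD with positive definite diagonal blocks and $Y$ as positive definite). The general observation is that for any positive definite block matrix $C = (C_{ij})_{n \times n}$ with $d \times d$ blocks and any $v \in \C^d \setminus \{0\}$, one has $v^H \bigl( \sum_{i,j} C_{ij} \bigr) v = w^H C w > 0$, where $w = (v^T, \dots, v^T)^T \in \C^{nd} \setminus \{0\}$. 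Applied to $C = A \ast Y$, whose block sum is exactly $N$, this yields $N \succ 0$. Now for $t = \alpha + i\beta$ with $\beta > 0$ and $v \neq 0$, the identity $v^H(M + B + tN) v = v^H(M+B)v + t \, v^H N v$ has imaginary part $\beta \, v^H N v > 0$, so $M + B + tN$ is invertible. Since $\det(M + B + tN)$ additionally has leading coefficient $\det N \neq 0$ in $t$, this univariate polynomial is non-zero and stable, so $f$ is psd-stable when $A \succ 0$.

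For general positive semidefinite $A$, I would perturb via $A^{(\epsilon)} := A + \epsilon I_{nd} \succ 0$ for $\epsilon > 0$, whose block entries are $A^{(\epsilon)}_{ij} = A_{ij} + \epsilon \delta_{ij} I_d$. The corresponding polynomial $f^{(\epsilon)}(Z) := \det\bigl(\sum_{i,j} A^{(\epsilon)}_{ij} z_{ij} + B\bigr)$ is psd-stable by the previous step, and as $\epsilon \to 0^+$ its coefficients converge to those of $f$, yielding uniform convergence on compact subsets of the Siegel upper half-space $\mathcal{H}_n$. Since $\mathcal{H}_n$ is a connected open subset of the space of complex symmetric matrices, Hurwitz's Theorem \ref{thm:Hurwitz} concludes that $f$ is either non-vanishing on $\mathcal{H}_n$, i.e., psd-stable, or identically zero there (hence identically zero as a polynomial). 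The main technical content I expect is isolating positive definiteness of the $d \times d$ matrix $N$ from the positive definiteness of the $nd \times nd$ matrix $A \ast Y$; the test-vector trick $w = (v, \dots, v)$ handles this cleanly, while the imaginary-part argument and the Hurwitz limit are standard in the stability literature.
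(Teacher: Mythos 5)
Your proof is correct and follows essentially the same route as the paper: reduction to the univariate restrictions $t\mapsto f(X+tY)$ via Lemma~\ref{lemma:K-stability-univariate}, positive definiteness of $N=\sum_{i,j}y_{ij}A_{ij}$ obtained from Liu's Khatri--Rao theorem (Proposition~\ref{thm:Khatri-Rao}) together with the test vector $(v^T,\dots,v^T)^T$, and the same Hurwitz limiting argument for general positive semidefinite $A$. The only cosmetic difference is in the last step of the definite case: the paper deduces real-rootedness of $\det(H+tQ)$ by passing to the Hermitian matrix $Q^{-1/2}HQ^{-1/2}$, whereas you exclude roots in the open upper half-plane directly from the positive imaginary part of $v^H(M+B+tN)v$; both arguments are standard and equivalent here.
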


\begin{proof}
	We write $I_d$ for the $d\times d$ identity matrix and $\mathbbm{1}_{m_1\times m_2}$ for the all-ones matrix of size $m_1\times m_2$. 
First consider the case where $A$ is positive semidefinite with positive definite blocks
on the diagonal.

Let $X, Y \in \mathcal{S}_n$ with $Y \succ 0$.
In view of Lemma \ref{lemma:K-stability-univariate}, we have to show that the univariate
polynomial $t \mapsto f(X + t Y)$ has only real roots.

We can interpret $Y$ as a block matrix with blocks of size $1\times 1$.
Using the Khatri-Rao product, $Y\ast A$ is a block matrix whose $(i,j)$-th block is $y_{ij}A_{ij}$, and we obtain the identity
	\begin{equation}
	  \label{eq:khatrirao1}
		\sum_{i,j=1}^n y_{ij} A_{ij} \ = \ (\mathbbm{1}_{1\times n}\otimes I_d)\cdot\big(Y\ast A\big)\cdot(\mathbbm{1}_{n\times 1}\otimes I_d).
	\end{equation}
   Note that the multiplication by the matrices from left and right in~\eqref{eq:khatrirao1} provides a block-wise summation of all the blocks in $Y\ast A$. 

By Proposition~\ref{thm:Khatri-Rao}, $Y\ast A$ is positive definite. 
	Hence, for $\mathbf{v}\in\R^d\setminus\{0\}$, we have 
	\begin{align*}
		&\mathbf{v}^T\left((\mathbbm{1}_{1\times n}\otimes I_d)\cdot\big(Y\ast A\big)\cdot(\mathbbm{1}_{n\times 1}\otimes I_d)\right)\mathbf{v}\\
		= \ & \
		(\mathbf{v}^T \cdots \mathbf{v}^T)\big(Y\ast A\big)(\mathbf{v}^T \cdots \mathbf{v}^T)^T \ > \ 0.
	\end{align*}
	This implies that the Hermitian matrix $Q := \sum_{i,j=1}^n A_{ij}y_{ij}$ is positive definite.

The positive definite matrix $Q$ has a square root $Q^{1/2}$. 
Set $H = \sum_{i,j=1}^n A_{ij} x_{ij} + B$.
Then, for any real symmetric $n\times n$-matrix $X$, the univariate polynomial
\begin{eqnarray*}
  t \ \mapsto \ f(X+tY) & = & \det\Big( \sum_{i,j=1}^n A_{ij} (x_{ij} + t y_{ij}) + B \Big) \\
                        & = & \det(H + tQ) \\
                        & = & \det(Q) \det(Q^{-1/2} H Q^{-1/2}+t I_d)
\end{eqnarray*}
has only real roots, since they are the negatives of the eigenvalues of the Hermitian 
matrix $Q^{-1/2} H Q^{-1/2}$.

Now, for the general case, let $A$ be a positive semidefinite matrix. 
Let $A^{(k)}=(A_{ij}^{(k)})_{n\times n}$ be a sequence of positive semidefinite block matrices with positive definite blocks on the diagonal, which approximate $A$. 
Then the polynomials $f^{(k)}(Z)=\det(\sum_{i,j=1}^n A_{ij}^{(k)}z_{ij}+B)$ 
are psd-stable and hence
have no root in the (open) Siegel upper half-plane. Due to Hurwitz's Theorem~\ref{thm:Hurwitz}, the limit polynomial $f$ is either identically zero or also non-vanishing on the Siegel upper half-plane, i.e., it is psd-stable.
\end{proof}

For an example of Theorem~\ref{thm:psd-stability-determinantal-polynomial},
observe that choosing $A$ as the block matrix with $2 \times 2$ blocks of size 
$2 \times 2$,
\[
  A_{11} = \begin{pmatrix}
    1 & 0 \\
    0 & 1
  \end{pmatrix} \, , \quad
  A_{12} = A_{21} = \begin{pmatrix}
  0 & 1/2 \\
  1/2 & 0
  \end{pmatrix} \, , \quad
  A_{22} = \begin{pmatrix}
  1 & 0 \\
  0 & 1
  \end{pmatrix}
\]
and $B=0$ results in Example~\ref{ex:ex3}. Since $A = (A_{ij})$ has the
double eigenvalues $1/2$ and $3/2$, it is positive semidefinite, so that
Theorem~\ref{thm:psd-stability-determinantal-polynomial} implies the
psd-stability of 
\[
  f(Z) \ = \ \det(A_{11} z_{11} + 2 A_{12} z_{12} + A_{22} z_{22}) \, ,
\quad \text{where } Z =  \begin{pmatrix} z_{11} & z_{12} \\
z_{12} & z_{22} \end{pmatrix} \, .
\]

The criterion stated in Theorem \ref{thm:psd-stability-determinantal-polynomial} is sufficient, but not necessary. The following is a
counterexample.

\begin{example}
Let $Z = (z_{ij})_{2 \times 2}$ be symmetric and
\begin{eqnarray*}
f(Z) & = & \det\left(\sum_{i,j=1}^2 A_{ij} z_{ij} \right) 
 \ = \ \det\left( \begin{pmatrix} 1 & 0\\ 0 & 5 \end{pmatrix}z_{11} + 2\begin{pmatrix} 0 & 
2 \\  2 & 0 \end{pmatrix}z_{12} + \begin{pmatrix} 5 & 0\\ 0 & 1 \end{pmatrix}z_{22} \right) \, .
\end{eqnarray*}
We claim that $f$ is psd-stable.
Namely, for a real matrix
$Y = (y_{ij})_{2 \times 2} \succ 0$, 
we have 
\[
f(Y) \ = \ (y_{11}+5y_{22})(5y_{11}+y_{22})-16y_{12}^2 \ > \ 5(y_{11}^2+y_{22}^2)+\big(26-16\big)y_{11}y_{22} \  > \ 0 \, ,
\]
since $y_{11},y_{22} > 0$. Hence, $\sum_{i,j=1}^2 A_{ij}y_{ij}\succ0$, and thus, 
by Example \ref{ex:ex1} (ii), $f$ is psd-stable. However, the matrix
	\[
	A \ = \ (A_{ij}) \ = \ \begin{pmatrix}
		1 & 0 & 0 & 2\\
		0 & 5 & 2 & 0\\
		0 & 2 & 5 & 0\\
		2 & 0 & 0 & 1
	\end{pmatrix}
	\]
	is not positive semidefinite, since already the  $2\times 2$-minor with indices $(1,4)$ is negative. 
\end{example}

We note that already 
the most simple case of a $2\times 2$-matrix $Z$ and diagonal coefficient matrices
$A_{ij}$ provides nonlinear conditions as the following statement shows. 

\begin{prop}
	Let $A_{ij}=\diag(a_{1}^{(ij)},\ldots,a_{d}^{(ij)})$ be diagonal $d\times d$-matrices, $1\leq i, j\leq n$. Then the block matrix $A = (A_{ij})$ is positive semidefinite if and only if for every $k \in \{1, \ldots, d\}$ the matrix
	$(a_{k}^{(ij)})_{1 \le i,j \le n}$ is positive semidefinite.
	
	For $n=2$ and a Hermitian block matrix $A = (A_{ij})$, the criterion becomes	
	\[
		a_{k}^{(11)}, a_{k}^{(22)}\geq0, \text{ and } a_{k}^{(11)}a_{k}^{(22)}-|a_{k}^{(12)}|^2 \ \geq \ 0 \ , \quad k=1,\ldots,d.
	\]
\end{prop}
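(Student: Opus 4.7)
The plan is to reduce the question to a block-diagonalization by permutation. Because every block $A_{ij}$ is diagonal, the $nd \times nd$ matrix $A$ has a nonzero entry at position $((i,k),(j,\ell))$ (indexing rows and columns by a block index $i,j \in \{1,\ldots,n\}$ and a within-block index $k,\ell \in \{1,\ldots,d\}$) only when $k = \ell$, in which case the entry equals $a_k^{(ij)}$. I would exhibit the permutation $P$ that reorders the index $(i,k)$ to $(k,i)$ (i.e., interleaves the roles of the outer and inner indices) and observe that $P^{T} A P$ is block diagonal with $d$ diagonal blocks of size $n\times n$, the $k$-th block being precisely $M_k := (a_k^{(ij)})_{1\le i,j \le n}$.

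Since $P$ is a permutation matrix, conjugation by $P$ preserves (Hermitian) positive semidefiniteness. Therefore $A \succeq 0$ if and only if $P^{T} A P \succeq 0$, and a block-diagonal Hermitian matrix is PSD if and only if each diagonal block is PSD. This gives the first equivalence: $A \succeq 0$ iff $M_k \succeq 0$ for every $k \in \{1, \ldots, d\}$. I should also briefly note that the Hermiticity of $A$ (which is part of the assumption when speaking of PSD) forces $a_k^{(ij)} = \overline{a_k^{(ji)}}$, so each $M_k$ is automatically Hermitian.

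For the second part, specialize to $n=2$. Each $M_k$ is a $2\times 2$ Hermitian matrix
\[
M_k \ = \ \begin{pmatrix} a_k^{(11)} & a_k^{(12)} \\ \overline{a_k^{(12)}} & a_k^{(22)} \end{pmatrix},
\]
and the classical Sylvester criterion (or direct computation of the two principal minors) says $M_k \succeq 0$ iff the diagonal entries are nonnegative and the determinant is nonnegative, that is, $a_k^{(11)} \ge 0$, $a_k^{(22)} \ge 0$, and $a_k^{(11)} a_k^{(22)} - |a_k^{(12)}|^2 \ge 0$. Combining with the first part yields the stated criterion.

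There is no real obstacle here; the only care is writing down the permutation $P$ unambiguously and verifying that the permuted matrix has the claimed block-diagonal form, which is a direct computation from the formula for the entries of $A$.
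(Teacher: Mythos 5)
Your proposal is correct and follows essentially the same route as the paper: permute rows and columns so that $A$ becomes block diagonal with blocks $M_k=(a_k^{(ij)})_{i,j}$, then use that a Hermitian block-diagonal matrix is positive semidefinite iff each block is, and finish the $n=2$ case with the standard $2\times 2$ criterion. The only difference is that you spell out the permutation and the Hermiticity of the $M_k$ a bit more explicitly than the paper does.
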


\begin{proof}
	Since the blocks of $A$ are diagonal matrices, every row of $A$ contains at most
	$n$ non-zero entries. Namely, in the $k$-th row of the $i$-th block row, these are
	the elements $a_{k}^{(ij)}$, $j=1,\ldots,n$.
	By reordering the rows and columns of $A$ using a permutation matrix $P$, the resulting matrix $P^TAP$ has block diagonal structure with blocks $A_k:=(a_{k}^{(ij)})_{i,j}$ of size $n\times n$.
	Thus, $A$ is positive semidefinite if and only if each block $A_k$ is positive semidefinite. 
		
	For $n=2$, these blocks are of size $2$. Hence, the minors of $A$ consist of factors of the form $a_{k}^{(11)}a_{k}^{(22)}-a_{k}^{(12)} a_{k}^{(21)}$ 
	together with diagonal elements.
\end{proof}

\section{Conclusion and outlook}

We have introduced the concept of conic stability for multivariate polynomials
in $\C[\mathbf{z}]$ and showed generalizations of some core results for
stable polynomials to the conic stability. These positive results also show that 
the conic generalization of the stability notion appears to be very natural and fruitful.
In particular, this raises the general question to which extent the theory of 
stable polynomials, for instance stability preserving operators,
can be generalized to the conic stability. 
With regards to the
theorems on conic stability in this article, a question is if
they can be further extended to even more general types of 
stability regions.

\medskip

\noindent
{\bf Acknowledgment.} The authors thank Bernd Sturmfels for an
inspiring conversation which became a trigger for the current
paper. Moreover, thanks to an anonymous referee for helpful
suggestions.

\bibliographystyle{plain}

\end{document}